\newtheorem{thm}{Theorem}[section]
\newtheorem{lem}[thm]{Lemma}
\newtheorem{prop}[thm]{Proposition}
\newtheorem{conj}[thm]{Conjecture}
\theoremstyle{definition}
\newtheorem{defn}[thm]{Definition}
\newtheorem{rem}[thm]{Remark}
\renewcommand{\include}{\input}
\newcommand{\vct}[1]{\mbox{$\textbf{#1}$}}
\newcommand{\nat}{\mathbb{N}}
\newcommand{\integ}{\mathbb{Z}}
\newcommand{\cl}[1]{\overline{#1}}
\newcommand{\restr}[2]{#1\hspace{-0.15cm}\restriction_{#2}}
\newcommand{\itin}{\mbox{\texttt{\textbf{It}}}}
\newcommand{\add}{\mbox{\texttt{\textbf{A}}}}
\newcommand{\parity}[1]{%
    \ifthenelse{\equal{#1}{<}  }{\prec}{}%
    \ifthenelse{\equal{#1}{>}  }{\succ}{}%
    \ifthenelse{\equal{#1}{\le}}{\preceq}{}%
    \ifthenelse{\equal{#1}{\ge}}{\succeq}{}%
}
\DeclareMathOperator{\Orb}{Orb}  % Orbit.
\title[$\omega$-Limit Sets of Tent Maps]{On the $\omega$-Limit Sets of Tent Maps}
\author[A.D. Barwell]{Andrew D. Barwell}
\author[G. Davies]{Gareth Davies}
\author[C. Good]{Chris Good}
\address[A. D. Barwell]{School of Mathematics, University of Bristol, Howard House, Queens Avenue, Bristol, BS8 1SN, UK -- and -- School of Mathematics, University of Birmingham, Birmingham, B15 2TT, UK}
\email{A.Barwell@bristol.ac.uk}
\address[G. Davies]{University of Oxford Mathematical Institute, 24-29 St. Giles', Oxford, OX1 3LB, UK}
\email{gareth.davies@lmh.ox.ac.uk}
\address[C. Good]{School of Mathematics, University of Birmingham, Birmingham, B15 2TT, UK}
\email{cg@for.mat.bham.ac.uk}
\subjclass[2010]{37B10, 37C25, 37C50, 37C70, 37E05, 54C05, 54H20} \keywords{Internal chain transitivity, internally chain transitive, omega-limit set, $\omega$-limit set, pseudo-orbit tracing property, shadowing, tent map, weak incompressibility}
\begin{document}

\begin{abstract}
For a continuous map $f$ on a compact metric space $(X,d)$, a set $D\subset X$ is \textit{internally chain transitive} if for every $x,y\in D$ and every $\delta>0$ there is a sequence of points $\langle x=x_0,x_1,\ldots,x_n=y\rangle$ such that $d(f(x_i),x_{i+1})<\delta$ for $0\leq i<n$. It is known that every $\omega$-limit set is internally chain transitive; in earlier work it was shown that for $X$ a shift of finite type, a closed set $D\subset X$ is internally chain transitive if and only if $D$ is an $\omega$-limit set for some point in $X$, and that the same is also true for the full tent map $T_2:[0,1]\rightarrow[0,1]$. In this paper, we prove that for tent maps with periodic critical point every closed, internally chain transitive set is necessarily an $\omega$-limit set. Furthermore, we show that there are at least countably many tent maps with non-recurrent critical point for which there is a closed, internally chain transitive set which is not an $\omega$-limit set.  Together, these results lead us to conjecture that for those tent maps with \textit{shadowing}, the $\omega$-limit sets are precisely those sets having internal chain transitivity.
\end{abstract}

\maketitle

\section{Introduction}\label{sec:intro}

In many recent texts, tent maps are cited as examples of simple maps with complicated and interesting dynamics. Furthermore, they have been the subject of many research articles

In a compact metric space $X$ with metric $d$, suppose $f:X\rightarrow X$ is a continuous map. The $\omega$-limit set $\omega(x,f)$ of a point $x\in X$ is given by
\[\omega(x,f):=\bigcap_{n\in\nat}\cl{\{f^k(x)\ :\ k\geq n\}},\]
where we may drop the dependence on the map $f$ if there is no ambiguity. So $\omega(x,f)$ is the set of accumulation points of the orbit $\Orb(x)=\Orb(x,f)$ of $x$. It is known that $\omega$-limit sets are non-empty, closed and invariant (by which we mean that $f(\omega(x))=\omega(x)$). They have been studied extensively, with particular focus on the $\omega$-limit sets of interval maps \cite{Agronsky, Balibrea, Barwell2, Barwell, Barwell3, Blokh, Bowen, BS, Bruin, Bruin2, goodknightraines2, goodknightraines, GoodRaines, goodrainessua}.

For $\delta>0$, a $\delta$-pseudo-orbit is a finite or infinite sequence of points $\langle x_0,x_1,\ldots\rangle$ such that $d(f(x_i),x_{i+1})<\delta$ for every $i\geq0$. A set $A\subset X$ is said to be \textit{internally chain transitive} (or to have \textit{internal chain transitivity}) if for every $x,y\in A$ and every $\delta>0$ there is a $\delta$-pseudo-orbit $\langle x=x_0,x_1,\ldots,x_n=y\rangle\subset A$. Chain transitivity has been studied as a dynamical property in its own right \cite{Dastjerdi, Bowen, Easton, Zheng}, and also in connection with biological systems, concerning in particular the properties of persistence and permanence \cite{Hirsch, Zheng2}. One of the first papers to link $\omega$-limit sets to the property of internal chain transitivity was \cite{Hirsch}, in which Hirsch et al show that every $\omega$-limit set is internally chain transitive, and furthermore every compact, internally chain transitive set is the $\omega$-limit set of some asymptotic pseudo-orbit. We have followed up this work in several articles: in \cite{Barwell} we show that for shifts of finite type, all closed, internally chain transitive sets are $\omega$-limit sets; in \cite{Barwell2} we show that for interval maps containing no homtervals (i.e. maps whose pre-critical points are dense in the interval, such as tent maps), all closed, internally chain transitive sets that do not contain the image of a critical point are $\omega$-limit sets; in \cite{Barwell3} we show that for the full tent map, \textit{all} internally chain transitive sets are $\omega$-limit sets. This leads us to the following conjecture:

\begin{conj}\label{conj:ICTomega_naive}
For a tent map $T:[0,1]\rightarrow [0,1]$, a closed set $L\subset [0,1]$ is internally chain transitive if and only if $L=\omega(x,T)$ for some $x\in [0,1]$.
\end{conj}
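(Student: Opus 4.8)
The forward implication is immediate from the literature: as noted above, Hirsch et al.\ \cite{Hirsch} prove that every $\omega$-limit set of a continuous map on a compact metric space is internally chain transitive, so I would simply invoke this for $f=T$. All the work lies in the converse: given a closed, \ii\ set $L\subseteq[0,1]$, produce a point $x\in[0,1]$ with $\omega(x,T)=L$.

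The plan is to exploit the \emph{shadowing} (pseudo-orbit tracing) property. The skeleton of the argument is the one that succeeds for the full tent map in \cite{Barwell3}: using \iity, for each $n$ I choose a finite $1/n$-net of $L$ and, since $L$ is \ii, splice consecutive net points together with $1/n$-pseudo-orbits that remain inside $L$; concatenating these blocks as $n\to\infty$ yields a single pseudo-orbit $\langle y_0,y_1,y_2,\dots\rangle\subseteq L$ whose successive jump sizes tend to $0$ and which is dense in $L$ and returns close to every point of $L$ infinitely often. Assuming $T$ has shadowing, I would trace this pseudo-orbit by a genuine orbit $\Orb(x)$, and then extract the two inclusions $\omega(x,T)\subseteq L$ and $L\subseteq\omega(x,T)$: the second from the density and repeated returns of the pseudo-orbit, the first from the fact that the pseudo-orbit never leaves the closed set $L$ while the shadowing error shrinks.

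The hard part will be the exactness of the inclusion $\omega(x,T)\subseteq L$. The pseudo-orbit tracing property alone guarantees only that $\Orb(x)$ stays within some fixed $\varepsilon$ of the pseudo-orbit, and $\varepsilon$-neighbourhoods of $L$ may contain accumulation points outside $L$; so the tracing error itself must be forced to $0$ along the orbit. This means I would need a quantitatively controlled, \emph{asymptotic} form of shadowing that tracks the tail of the pseudo-orbit with error tending to $0$, rather than the bare pseudo-orbit tracing property. For the tent family this is precisely where the difficulty concentrates, since shadowing is not available for every slope and the achievable tracing strength is governed by the behaviour of the critical orbit: I expect the construction to run smoothly exactly when $T$ has shadowing (for example when the critical point is periodic), and to be the true obstruction when the critical orbit is non-recurrent.

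To finish, I would check that the traced orbit accumulates nowhere off $L$, and here the no-homterval property of tent maps is the essential structural input: density of pre-critical points prevents a whole interval of orbit points from drifting coherently away from $L$, and lets one upgrade the shrinking tracing error into genuine convergence of $\Orb(x)$ to $L$. The single obstruction to a proof valid for \emph{all} tent maps, as opposed to those possessing shadowing, is thus the failure of asymptotic shadowing when the critical orbit is non-recurrent --- exactly the phenomenon the counterexamples of this paper exploit. For this reason I expect the clean biconditional to survive only under the additional hypothesis that $T$ has shadowing, matching the refined conjecture announced in the abstract.
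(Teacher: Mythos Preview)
The statement you are attempting to prove is a \emph{conjecture} that the paper explicitly \emph{refutes}: Theorem~\ref{thm:countereg} constructs, for infinitely many slopes $\lambda\in(\sqrt 2,2)$, a closed internally chain transitive set that is not an $\omega$-limit set. So there is no proof of this statement in the paper to compare against, and any purported proof of the full biconditional must contain an error.

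You in fact anticipate this yourself: your final paragraph concedes that the argument only goes through when $T$ has shadowing, and that the non-recurrent critical orbit is ``the true obstruction''. That diagnosis is correct and matches the paper's counterexamples, whose kneading sequences $K=10^k(110)^\infty$ have non-recurrent critical point. But this means your write-up is not a proof of Conjecture~\ref{conj:ICTomega_naive}; it is a heuristic for why the conjecture should be weakened to Conjecture~\ref{conj:ICTomega_tent}. The paper does more: it gives an explicit $L$ (a countable set accumulating on a $3$-cycle and on the forward orbit of $c$) and a combinatorial case analysis showing that no admissible itinerary can have $L$ as its $\omega$-limit set.

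One further caution about the positive direction. Your sketch for the shadowing case (``trace a concatenated pseudo-orbit with shrinking jumps and argue $\omega(x)=L$'') is the right intuition, but the paper's actual proof of Theorem~\ref{thm:_Periodic_tent-map} shows that bare shadowing plus a limiting argument is not enough when $c\in D$ and $c$ is not isolated: one must first \emph{enlarge} $D$ to auxiliary sets $D_n$ (Proposition~\ref{prop:ICTextn}) so that pre-critical points in $D$ are approached from the accessible side of $c$, then build the pseudo-orbit inside $D_n$, and finally verify admissibility of the resulting symbolic sequence $\gamma$ directly via Lemma~\ref{lem:periodadmis} rather than by an abstract asymptotic-shadowing principle. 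Your outline glosses over exactly the place where the critical point interferes with naive tracing.
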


A map $f:X\rightarrow X$ has the \textit{pseudo-orbit tracing property}, or \textit{shadowing}, if for every $\epsilon>0$ there is a $\delta>0$ such that for every $\delta$-pseudo-orbit $\langle x_0,x_1,\ldots \rangle$ there is a point $y\in X$ such that $d(f^i(y),x_i)<\epsilon$ for every $i\geq0$; we say that the orbit of $y$ \textit{$\epsilon$-shadows} the pseudo-orbit. There is much evidence to suggest a link between maps with shadowing and maps for which internally chain transitive sets are necessarily $\omega$-limit sets: shifts of finite type are known to have shadowing, as is the tent map with slope equal to 2, and for both types of map we have that every internally chain transitive set is necessarily an $\omega$-limit set. We show in \cite{Barwell3} that for maps on general compact metric spaces with a type of shadowing called \textit{limit shadowing}, every internally chain transitive set is an $\omega$-limit set.

The first main result in this paper is the following:

\vspace{0.5cm}
\noindent \textbf{Theorem \ref{thm:countereg}.}
\emph{For infinitely many values $\lambda\in(\sqrt{2},2)$ there is a tent map $T:I\rightarrow I$ with slope $\lambda$ for which there exists a closed, internally chain transitive set $L$ which is not the $\omega$-limit set for any point in $I$.}\\

%which shows that at least countably infinitely many tent maps have an internally chain transitive subset which is not an $\omega$-limit set, providing a counter example to Conjecture \ref{conj:ICTomega_naive};
We use symbolic dynamics and the kneading theory to construct the maps in the proof of this result, and these maps do not have shadowing as the critical point is not recurrent. Thus we suggest the following revision of Conjecture \ref{conj:ICTomega_naive}:

\begin{conj}\label{conj:ICTomega_tent}
For a tent map $T:[0,1]\rightarrow [0,1]$ with shadowing, a closed set $L\subset [0,1]$ is internally chain transitive if and only if $L=\omega(x,T)$ for some $x\in [0,1]$.
\end{conj}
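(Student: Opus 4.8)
The plan is to prove both implications, noting that the forward direction---every $\omega$-limit set is \ii---is already established by Hirsch et al.\ \cite{Hirsch} and holds for \emph{any} continuous map, so neither shadowing nor the tent-map structure is needed there. The entire content of the conjecture lies in the converse: given a tent map $T$ with shadowing and a closed (hence compact) \ii\ set $L\subset[0,1]$, I must produce a point $x$ with $\omega(x,T)=L$.

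The first step is to record, again from \cite{Hirsch}, that every compact \ii\ set is realised as the set of accumulation points of some \emph{asymptotic pseudo-orbit}; that is, there is a sequence $\langle x_0,x_1,\ldots\rangle$ with $d(T(x_i),x_{i+1})\to 0$ as $i\to\infty$ whose accumulation set is exactly $L$. Concretely one concatenates, for a countable dense subset $\{a_k\}$ of $L$, finite $\delta_k$-pseudo-orbits lying in $L$ that run from $a_k$ to $a_{k+1}$, with $\delta_k\to 0$; internal chain transitivity guarantees these connecting chains exist inside $L$, the decay of the errors makes the concatenation asymptotic, and since the sequence stays in the closed set $L$ and visits a dense subset, its accumulation set is precisely $L$. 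Thus the problem reduces to a purely dynamical question: can every asymptotic pseudo-orbit of $T$ be shadowed by a genuine orbit in an \emph{asymptotic} sense, so that the shadowing orbit inherits $L$ as its $\omega$-limit set?

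This is exactly where a strengthening of ordinary shadowing is required. Plain shadowing furnishes, for each fixed $\eps$, a point whose orbit $\eps$-shadows a sufficiently good pseudo-orbit; but an $\eps$-shadowing orbit has $\omega$-limit set only within Hausdorff distance $\eps$ of $L$, not equal to $L$. What one needs is \emph{limit shadowing}, under which an asymptotic pseudo-orbit is tracked by an orbit with errors tending to $0$; by the theorem of \cite{Barwell3}, limit shadowing forces every \ii\ set to be an $\omega$-limit set, which would finish the proof. Accordingly the heart of the argument is to show that \textbf{for tent maps the (ordinary) shadowing property implies limit shadowing.} I would attack this using the special structure of $T$: it is piecewise linear and uniformly expanding with slope $\lambda>\sqrt2$, its pre-critical points are dense, and by the known characterisation of shadowing in the tent family (Coven--Kan--Yorke) the maps $T$ with shadowing are singled out by a combinatorial condition on the forward orbit of the critical point. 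Using expansion one should upgrade a single $\eps$-shadowing point to a nested sequence of shadowing points for the successive tails of the asymptotic pseudo-orbit, and then extract a limit point whose orbit tracks the whole pseudo-orbit with errors going to $0$.

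The main obstacle is precisely this upgrade, since shadowing does \emph{not} imply limit shadowing for general maps, and even among interval maps the two properties are genuinely distinct; the proof must therefore use the expanding, piecewise-linear nature of tent maps in an essential way rather than any soft compactness argument. A secondary difficulty, specific to the interval, is the behaviour of the shadowing orbit near the critical point $c$ and its forward orbit: because $T$ folds a neighbourhood of $c$, distinct pseudo-orbit segments can be forced together there, so controlling the itinerary of the shadowing point near $c$ is what makes the recurrence properties of the critical orbit---and hence the Coven--Kan--Yorke dichotomy---enter the argument. Managing this is also what ties the conjecture to the failure cases of Theorem~\ref{thm:countereg}, whose tent maps were constructed so that the critical point is non-recurrent and shadowing fails.
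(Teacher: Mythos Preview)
The statement you are attempting to prove is labelled \emph{Conjecture}~\ref{conj:ICTomega_tent} in the paper, and the paper does \emph{not} prove it; it remains open. What the paper establishes is the special case of tent maps with \emph{periodic} critical point (Theorem~\ref{thm:_Periodic_tent-map}), together with the counterexamples of Theorem~\ref{thm:countereg} showing that the shadowing hypothesis cannot be dropped. There is therefore no ``paper's own proof'' to compare your proposal against.

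Your outline is honest about its gap: the entire argument rests on the implication ``shadowing $\Rightarrow$ limit shadowing'' for tent maps, which you correctly identify as unproved and as the main obstacle. This is not a minor technicality---if that implication were known, the conjecture would follow immediately from the cited result in \cite{Barwell3}, and the authors would presumably have said so. Your sketch of how to attack it (exploit uniform expansion to nest shadowing points for successive tails and pass to a limit) is plausible in spirit but is not a proof; the folding at the critical point, which you also flag, is exactly where a naive nesting argument can break down.

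It is worth noting that the paper's proof of the periodic-critical-point case does \emph{not} proceed via limit shadowing. Instead it builds the point $y$ with $\omega(y,T)=D$ directly: it enlarges $D$ to auxiliary \ii\ sets $D_n$ (Proposition~\ref{prop:ICTextn}) so that pre-critical points are no longer isolated, uses ordinary shadowing to produce genuine orbit segments $\epsilon_n$-close to finite pseudo-orbits in $D_n$, concatenates the resulting address sequences into a single symbolic sequence $\gamma$, and then uses the kneading theory (admissibility, Lemma~\ref{lem:periodadmis}, Proposition~\ref{prop:orbit_seg}) to show that $\gamma$ is the itinerary of an actual point $y$ with the required $\omega$-limit set. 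This bypasses limit shadowing entirely but relies heavily on the periodicity of $K$, so it does not obviously generalise to the full conjecture.
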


Much of the work on $\omega$-limit sets focuses on interval maps, particularly tent maps, as these maps have more uniform behaviour than smooth maps whilst maintaining very interesting and rich dynamics \cite{Barwell, Barwell3, Bruin, Bruin2, GoodRaines, goodrainessua}. However Conjecture \ref{conj:ICTomega_tent} has the following, more general formulation, to which we know of no counter example:

\begin{conj}\label{conj:ICTomega_general}
For a compact metric space $X$ and a continuous map $f:X\rightarrow X$ with shadowing, a closed set $L\subset X$ is internally chain transitive if and only if $L=\omega(x,f)$ for some $x\in X$.
\end{conj}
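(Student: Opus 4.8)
The two implications require quite different work, so I would treat them separately. The forward implication---that every $\omega$-limit set is internally chain transitive---is already available: it is the result of Hirsch et al.\ quoted in the introduction, and it uses neither shadowing nor any special structure of $X$, only continuity and compactness of the phase space. So the entire content lies in the converse, and the plan is to show that if $L$ is closed and internally chain transitive then shadowing lets us realise $L$ as $\omega(x,f)$ for a single point $x$.

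First I would reduce the converse to a Hausdorff-approximation statement. Fix a countable dense set $\{a_1,a_2,\dots\}\subset L$ and a sequence $\eps_n\downarrow 0$, and for each $\eps_n$ let $\delta_n\downarrow 0$ be the corresponding shadowing constant. Using internal chain transitivity of the compact set $L$, I would splice together $\delta_n$-chains lying inside $L$ to build, for each $n$, a finite $\delta_n$-pseudo-orbit that returns to a fixed basepoint of $L$ and passes within $\eps_n$ of every $a_i$ with $i\le n$; cycling such a block produces a $\delta_n$-pseudo-orbit whose own set of accumulation points is Hausdorff-within $\eps_n$ of $L$. Shadowing then yields a point $x_n$ whose forward orbit $\eps_n$-tracks this pseudo-orbit, and a short argument (accumulation points of the orbit lie within $\eps_n$ of accumulation points of the pseudo-orbit, and conversely) shows that the genuine $\omega$-limit set $W_n:=\omega(x_n,f)$ satisfies $d_H(W_n,L)\to 0$.

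The second and decisive step is to pass from this sequence of $\omega$-limit sets to the set $L$ itself, that is, to prove that the collection $\{\omega(x,f):x\in X\}$ is closed in the Hausdorff metric; once this is known, $W_n\to L$ forces $L$ to be an $\omega$-limit set and the proof is complete. This is where I expect the real obstacle to lie, and it is precisely here that plain shadowing seems too weak on its own: shadowing controls a pseudo-orbit only up to a single prescribed $\eps$, whereas realising the exact limit $L$ rather than an $\eps$-fattening of it appears to require the orbit to track an asymptotic pseudo-orbit with errors tending to $0$---a form of limit shadowing, which is exactly the hypothesis under which the converse was already established in \cite{Barwell3}.

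I would therefore try to close the gap by concatenating the blocks above into one asymptotic pseudo-orbit, the junction errors being the decreasing $\delta_n$ so that the errors tend to $0$, and then showing that in the presence of shadowing such an asymptotic pseudo-orbit is tracked by a single orbit whose $\omega$-limit set is exactly its accumulation set $L$; equivalently, proving that shadowing upgrades to closedness of the family of $\omega$-limit sets. Whether plain shadowing suffices for this upgrade, or whether some additional chain-mixing or structural hypothesis on $X$ is genuinely needed, is the crux of the conjecture and the step on which any complete proof will stand or fall.
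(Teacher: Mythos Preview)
The statement you are attempting to prove is labelled in the paper as Conjecture~\ref{conj:ICTomega_general}; it is an open problem, and the paper contains no proof of it. The authors explicitly say they ``know of no counter example,'' and the paper's actual results (Theorems~\ref{thm:countereg} and~\ref{thm:_Periodic_tent-map}) are offered only as evidence for the conjecture in the special setting of tent maps. So there is no ``paper's own proof'' to compare your proposal against.

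Your proposal is an honest outline that correctly locates where the difficulty lies. The forward direction is indeed Hirsch et al., and your first step in the converse---building, for each $\eps_n$, a periodic $\delta_n$-pseudo-orbit in $L$ that $\eps_n$-fills $L$, shadowing it, and deducing $d_H(\omega(x_n),L)\to 0$---is sound. The genuine gap is precisely the one you flag yourself: passing from Hausdorff approximation by $\omega$-limit sets to $L$ itself being an $\omega$-limit set. Your suggestion to concatenate the blocks into a single asymptotic pseudo-orbit and then shadow it is exactly the argument that works under \emph{limit shadowing} (as in \cite{Barwell3}), but ordinary shadowing gives no control on the tail errors and does not, in general, imply that the family $\{\omega(x,f):x\in X\}$ is Hausdorff-closed. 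You have not supplied a mechanism to bridge this, and neither has anyone else; this is why the statement remains a conjecture. For contrast, the paper's proof of the special case Theorem~\ref{thm:_Periodic_tent-map} does not proceed via your general scheme at all: it exploits the kneading theory and explicit combinatorics of itineraries for tent maps with periodic critical point to construct the point $y$ with $\omega(y)=D$ directly, an approach that has no analogue in an abstract compact metric space.
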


Many recent results on $\omega$-limit sets of interval maps have used symbolic dynamics and kneading theory (see \cite{Barwell2, Barwell, GoodRaines, goodrainessua} for examples). In this paper, we use results and techniques from symbolic dynamics and kneading theory, together with conventional analysis of interval maps, extending the theory in both areas where necessary.

In addressing Conjecture \ref{conj:ICTomega_tent}, the second main result in this paper is:

\vspace{0.5cm}
\noindent \textbf{Theorem \ref{thm:_Periodic_tent-map}.}
\emph{Suppose that $T:I\rightarrow I$ is a tent map with periodic critical point. For a closed set $D\subset I$ the following are equivalent:
\begin{enumerate}
	\item $D$ is internally chain transitive;
	\item $D$ is weakly incompressible;
	\item $D=\omega(y,T)$ for some $y\in I$.\\
\end{enumerate}}

%in which we show that for every tent map with a periodic critical point, a closed set $D\subset I$ is internally chain transitive if and only if $D$ is an $\omega$-limit set;
In the proof of this result we use a combination of conventional analysis and symbolic dynamics, together with several new results in these areas. Notice that since the critical point is periodic these maps have the shadowing property.

In \cite{Sarkovskii}, {\v{S}}arkovs$'$ki{\u\i} defines a property of invariant sets called \textit{weak incompressibility} (Definition \ref{WI} in this text), and proves that it is an inherent property of all $\omega$-limit sets. In \cite{Barwell3} we show that in compact metric spaces weak incompressibility is equivalent to internal chain transitivity, thus all theorems and conjectures concerning internal chain transitivity in this paper can also be formulated in terms of weak incompressibility.

In this paper we use standard terminology, as found in \cite{BlockCoppel, ColletEckmann, deMelo}.

\section{A Review of Symbolic Dynamics for Tent Maps}\label{symdyn}

We begin with a summary of symbolic dynamics and kneading theory for tent maps, developed by several authors  \cite{ColletEckmann, deMelo, Guck, MilnorThurston}, which we will rely upon throughout this paper. We use the notation $I$ to denote the compact interval $[0,1]$.

Let $\Omega=\{0,1,C\}$ and $T_{\lambda}:I\rightarrow I$ be a tent map with slope $\lambda\in(1,2)$ defined as usual:

\[T_{\lambda}(x)=\begin{cases}\lambda x &\mbox{ for }x\in[0,1/2]\\ \lambda(1-x) &\mbox{ for }x\in[1/2,1].\end{cases}\]

We define the symbolic dynamics for $T_{\lambda}$ with critical point $c=1/2$, including the address map $\add:I\rightarrow\Omega$, itinerary map $\itin:I\rightarrow\Omega^{\nat}$ and \textit{parity lexicographic ordering} $\prec$ as follows.

For $x\in I$ define
\[\add(x)=\begin{cases}\begin{array}{ll} 0&\mbox{ for }x\in[0,c)\\ C&\mbox{ if }x=c\\ 1&\mbox{ for }x\in(c,1]\end{array}\end{cases}\]
and
\[\itin(x)=\bigl(\add(x)\add(T_{\lambda}(x))\add(T_{\lambda}^2(x))\ldots\bigr).\]

Any finite sequence $\vct{r}$ of symbols from $\Omega$ will be referred to as a \textit{word}, and we will denote the length of the word $\vct{r}$ by $|\vct{r}|$. If a word $\vct{r}$, with $|\vct{r}|=k$ appears as the initial $k$ symbols in $\vct{s}$, where $\vct{s}$ is either another word or an infinite sequence in $\Omega^{\nat}$, we say that $\vct{r}$ is an \textit{initial $k$-segment} of $\vct{s}$, and if $\vct{r}$ appears in an arbitrary place in $\vct{s}$ we say that $\vct{r}$ is simply a \textit{$k$-segment} of $\vct{s}$; in either case we may drop the dependence upon $k$ if the length of $\vct{r}$ is unknown. Furthermore, we say that a word $\vct{r}$ is \textit{even} if it contains an even number of $1$'s and \textit{odd} otherwise. We use the symbol $^{\smallfrown}$ to represent the concatenation of sequences (either finite or infinite).

For two sequences $\vct{s}=(s_0s_1\ldots)$ and $\vct{t}=(t_0t_1\ldots)$ in $\Omega^{\nat}$ (or two words $\vct{s}=(s_0s_1\ldots s_{n-1})$ and $\vct{t}=(t_0t_1\ldots t_{n-1})$ in $\Omega^{n}$ for some $n\in\nat$), let $\restr{\vct{s}}{k}=s_0s_1\dots s_{k-1}$, then the \textit{discrepancy} of $\vct{s}$ and $\vct{t}$ is $k$ if $\restr{s}{k}=\restr{t}{k}$ and $s_k\neq t_k$. Assign a metric $d$ to $\Omega^{\nat}$ such that $d(\vct{s},\vct{t})=1/2^k$ where the discrepancy between $\vct{s}$ and $\vct{t}$ is $k$. For two sequences $\vct{s}$ and $\vct{t}$ with discrepancy $k$, define the \textit{parity lexicographic ordering}, $\prec$, on $\Omega^{\nat}$ (equally $\Omega^n$ for any $n\in\nat$) by declaring $0<C<1$, then $\vct{s}\prec \vct{t}$ provided either
\begin{enumerate}
     \item $\restr{\vct{s}}{k-1}=\restr{\vct{t}}{k-1}$ is even, and $s_k<t_k$, or
     \item $\restr{\vct{s}}{k-1}=\restr{\vct{t}}{k-1}$ is odd, and $s_k>t_k$.
\end{enumerate}

If we let the discrete topology on $\Omega$ be denoted ${\mathcal T}$ then the metric $d$ generates the Tychonoff product of ${\mathcal T}$ on the shift space $\Omega^{\nat}$. The following lemma is well-known.

\begin{lem}\label{lem:itincont}
For a tent map $T$ with critical point $c$, the itinerary map is continuous at $x$ if and only if $f^k(x)\neq c$ for all $k\geq0$.
\end{lem}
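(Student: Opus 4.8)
The plan is to prove both implications of Lemma~\ref{lem:itincont} directly from the definitions, relying on the fact that the address map $\add$ is continuous precisely away from the critical point $c$, together with the standard characterisation of the topology on $\Omega^{\nat}$ as the Tychonoff product topology (so that a basic neighbourhood of $\itin(x)$ is determined by agreement on the first $k$ coordinates, for some $k$).

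For the ``if'' direction, suppose $f^k(x)\neq c$ for all $k\geq 0$. Fix $\epsilon>0$ and choose $k\in\nat$ with $1/2^k<\epsilon$. Each of the finitely many points $x, f(x),\ldots,f^{k-1}(x)$ lies off the critical point, so $\add$ is continuous at each of them; hence there is $\eta_j>0$ such that $\add$ is constant on the $\eta_j$-ball around $f^j(x)$. By continuity of $f$ (and of its iterates) we can pull these back: choose $\gamma>0$ small enough that $|y-x|<\gamma$ forces $|f^j(y)-f^j(x)|<\eta_j$ for all $j<k$, and therefore $\add(f^j(y))=\add(f^j(x))$ for all $j<k$. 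This means $\restr{\itin(y)}{k}=\restr{\itin(x)}{k}$, so $d(\itin(y),\itin(x))\leq 1/2^k<\epsilon$, establishing continuity at $x$.

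For the ``only if'' direction, I will prove the contrapositive: suppose $f^k(x)=c$ for some $k\geq 0$, and take the least such $k$. Then $\add(f^k(x))=C$, but $c$ is a strict local extremum of nothing in the relevant sense --- rather, $c$ is the turning point, so points immediately to the left of $c$ have address $0$ and points immediately to the right have address $1$. Since $f^k$ is continuous and $f^k(x)=c$, and since $x$ is not itself eventually at $c$ before step $k$ (in particular $f^j(x)$ for $j<k$ avoids $c$, so $f^j$ stays locally monotone near $x$ there), one shows $f^k$ is non-constant in every neighbourhood of $x$; hence every neighbourhood of $x$ contains points $y$ with $f^k(y)<c$ and points $y'$ with $f^k(y')>c$, giving $\add(f^k(y))=0\neq 1=\add(f^k(y'))$. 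Thus $\itin$ takes values in every neighbourhood of $x$ that differ from $\itin(x)$ in the $k$-th coordinate, so $\itin$ is not continuous at $x$.

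The main obstacle is the technical point in the ``only if'' direction: verifying that $f^k$ genuinely takes values on both sides of $c$ arbitrarily close to $x$. This requires knowing that $f^k$ is not locally constant at $x$, which for tent maps follows because $T_\lambda$ is piecewise linear with nonzero slope and hence locally injective away from $c$ --- so as long as $f^j(x)\neq c$ for $j<k$, the composition $f^k$ is locally injective (indeed locally linear, up to finitely many breakpoints) near $x$, and its image therefore contains a genuine interval around $c=f^k(x)$. I would state this monotonicity/local-injectivity fact explicitly and cite the piecewise-linear structure of $T_\lambda$, being slightly careful at the boundary case $k=0$ (where $x=c$ itself) and when some $f^j(x)$ for $j<k$ happens to be an endpoint $0$ or $1$; in the latter situation one works one-sidedly, which still suffices to produce the required pair of nearby points with differing address.
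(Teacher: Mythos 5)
Your argument is correct. The paper itself offers no proof of Lemma~\ref{lem:itincont} (it is stated as well-known), and what you give is exactly the standard argument that this citation implicitly relies on: local constancy of $\add$ away from $c$ pulled back through finitely many iterates for continuity, and local injectivity of $T^k$ at $x$ (since $x$ avoids the finitely many breakpoints, i.e.\ the pre-critical points of order $<k$) to force a symbol in $\{0,1\}$ at coordinate $k$ for all nearby $y\neq x$, giving $d(\itin(y),\itin(x))\geq 2^{-k}$ and hence discontinuity. Two minor remarks: you do not actually need points on both sides of $c$ --- any $y\neq x$ in the locally injective neighbourhood already has $k$-th address different from $C$ --- and the endpoint worries are vacuous here, since an orbit that hits $0$ or $1$ stays at $0$ and so never reaches $c$ afterwards.
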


We can now define the \textit{upper-} and \textit{lower-limit itineraries} $\itin^+$ and $\itin^-$ respectively as
\begin{align*}
    \itin^+(x) &= \lim_{y\downarrow x}\itin(y)\\
    \mbox{and }\itin^-(x) &= \lim_{y\uparrow x}\itin(y),
\end{align*}
where the limit is taken in the space $\Omega^{\nat}$. Limit itineraries never contain the symbol $C$, so the limit itinerary of a point whose itinerary contains no instance of a $C$ will coincide with its itinerary \cite{Barwell2}.

For a tent map $T:I\rightarrow I$, the \textit{kneading sequence} $K_{T}$ is defined by
\[K_{T} = \sigma(\mbox{\itin}^+(c)) = \mbox{\itin}^-(T(c)),\]
where we may drop the subscript $T$ in $K_T$ if there is no ambiguity as to which map we are referring to. Notice that points in a neighbourhood of $c$ are mapped below $T(c)$, so the above definition is consistent. By Lemma \ref{lem:itincont}, the itinerary map is continuous at points whose itineraries contain no instance of a $C$, so when the critical point is not periodic we get that $K=\itin(T(c))$.

In some texts, the kneading sequence is actually defined as the itinerary of $T(c)$. We use the definition as stated here since it makes the admissibility conditions below easier to specify in the case of maps with periodic critical point.

The following conditions are well-known (see \cite{Barwell2, ColletEckmann, deMelo, MilnorThurston}) and tell us when a sequence $\vct{s}\in\Omega^{\nat}$ (or a word $\vct{s}\in\Omega^{k}$) is actually the itinerary (or initial $k$-segment of the itinerary) of some point $x\in I$.

For a tent map $T:I\rightarrow I$ with kneading sequence $K$, suppose that the sequence $\vct{s}$ satisfies the following condition:
\begin{align*}
	\mbox{either }\sigma^i(\vct{s}) &\prec K\mbox{ for every }i\geq0,\\
	\mbox{or for some $n\in\nat$, }\\
	\sigma^n(\vct{s})=\itin(c)\mbox{ and }\sigma^i(\vct{s}) &\prec K\mbox{ for every }0\leq i<n.
\end{align*}
Then there is an $x\in [0,T(c)]$ for which $\vct{s}=\itin(x)$ (we say that $\vct{s}$ is \textit{admissible}). Furthermore, if $\vct{s}=\itin(x)$ for some $x\in [0,T(c)]$, then
\begin{align*}
	\mbox{either }\sigma^i(\vct{s}) &\preceq K\mbox{ for every }i\geq0,\\
	\mbox{or for some $n\in\nat$, }\\
	\sigma^n(\vct{s})=\itin(c)\mbox{ and }\sigma^i(\vct{s}) &\preceq K\mbox{ for every }0\leq i<n.
\end{align*}
%\[\sigma^n(\vct{s})\preceq K\mbox{ for every }n\geq1\]
We say that in this case $\vct{s}$ \textit{does not violate admissibility}.

We can treat finite words in a similar way. Namely, a word $\vct{s}$ of any length $k$ for which
\begin{align*}
	\mbox{either }\sigma^i(\vct{s}) &\prec \restr{K}{k-i}\mbox{ for every }0\leq i<k,\\
	\mbox{or for some $n < k$, }\\
	\sigma^n(\vct{s})=\restr{\itin(c)}{k-n}\mbox{ and }\sigma^i(\vct{s}) &\prec \restr{K}{n-i}\mbox{ for every }0\leq i<n
\end{align*}
is such that $\vct{s}=\restr{\itin(x)}{k}$ for some $x\in [0,T(c)]$, and $\vct{s}$ is said to be \textit{admissible}. Finally, if $\vct{s}=\restr{\itin(x)}{k}$ for some $x\in [0,T(c)]$, then
\begin{align*}
	\mbox{either }\sigma^i(\vct{s}) &\preceq \restr{K}{k-i}\mbox{ for every }0\leq i<k,\\
	\mbox{or for some $n < k$, }\\
	\sigma^n(\vct{s})=\restr{\itin(c)}{k-n}\mbox{ and }\sigma^i(\vct{s}) &\prec \restr{K}{n-i}\mbox{ for every }0\leq i<n.
\end{align*}
%\[\sigma^n(\vct{s})\preceq \restr{K}{k-n}\mbox{ for every }1\leq n<k\]
Again, $\vct{s}$ is said to \textit{not violate admissibility}.

\begin{defn}\label{def:admiscond}
The conditions above are known as \textit{admissibility conditions} for sequences $\vct{s}$.
\end{defn}

\begin{rem}
The only points which do not follow the admissibility rule of Definition \ref{def:admiscond} are the points $x\in(T(c),1]$ (whenever this is a non-degenerate interval). Indeed, since every point in $(T(c),1]$ is mapped immediately into $[0,T(c)]$, points in $(T(c),1]$ have itinerary $(1\vct{s})$, where $\vct{s}$ is an infinite sequence which does not violate admissibility. We omit these points from the formal description of admissibility since every $\omega$-limit set must be a subset of the maximal invariant interval $[T^2(c),T(c)]$, we will not be concerned with the itineraries of points in $(T(c),1]$.
\end{rem}

%Lastly, in what follows we will use the symbol $\sqsubset$ to denote the concatenation of sequences and words.
For an interval map $f:I\rightarrow I$, a subinterval $J\subset I$ is called a \textit{homterval} if $c\notin f^n(J)$ for every $n\geq0$ and any local extremum $c$. Notice that tent maps $T$ with slope $\lambda\in(1,2]$ have no homterval, since every subinterval expands under $T$ until eventually it contains the critical point $c$.

Lemmas \ref{lem:itinorder}, \ref{lem:itininjective} and \ref{lem:itinint} are well-known \cite{Barwell2, Barwell, ColletEckmann, deMelo}. We state them here as they will be of use in what follows.

\begin{lem}\label{lem:itinorder}
For a continuous map $f:I\rightarrow I$ and for $x,y\in I$, \emph{$\itin(x)\prec\itin(y)$} implies that $x<y$.
\end{lem}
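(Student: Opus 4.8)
The plan is to prove the implication directly, by induction on the discrepancy $k$ of $\itin(x)$ and $\itin(y)$. Since $\itin(x)\prec\itin(y)$ entails $\itin(x)\neq\itin(y)$, the discrepancy $k$ is well defined and $x\neq y$, so it suffices to rule out $x>y$. Two facts drive the induction: the three address fibres $\add^{-1}(0)=[0,c)$, $\add^{-1}(C)=\{c\}$ and $\add^{-1}(1)=(c,1]$ are linearly ordered consistently with $0<C<1$ on $\Omega$; and $T$ is strictly increasing on $[0,c]$ and strictly decreasing on $[c,1]$, so each branch is injective with a fixed orientation. (Only this unimodal structure of $T$ at $c$ is used, so the argument is not special to tent maps.)

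For the base case $k=0$ we have $\add(x)\neq\add(y)$, and since the empty initial word is even, $\itin(x)\prec\itin(y)$ forces $\add(x)<\add(y)$; running through the three resulting possibilities and using the ordering of the fibres yields $x<y$ at once. For the inductive step, suppose the result holds for all pairs of discrepancy less than $k$ and let $\itin(x)\prec\itin(y)$ have discrepancy $k\geq1$. Then $\add(x)=\add(y)$, and this common value is not $C$ (otherwise $x=y=c$). Using $\sigma\,\itin(z)=\itin(T(z))$, the sequences $\sigma\,\itin(x)$ and $\sigma\,\itin(y)$ have discrepancy $k-1$, and deleting the shared leading symbol changes the number of $1$'s occurring before the point of discrepancy by $0$ if that symbol is $0$ and by $1$ if it is $1$. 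Hence the parity governing the comparison at the discrepancy is preserved when $\add(x)=\add(y)=0$, so that $\itin(T(x))\prec\itin(T(y))$ and the inductive hypothesis together with the monotonicity of $T$ on $[0,c)$ gives $x<y$; while the parity is reversed when $\add(x)=\add(y)=1$, so that $\itin(T(y))\prec\itin(T(x))$, whence $T(y)<T(x)$ by the inductive hypothesis and $x<y$ because $T$ is decreasing on $(c,1]$.

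The step requiring genuine care is this parity bookkeeping under the shift: one must verify that the odd branch of the parity lexicographic order matches precisely the orientation-reversing branch of $T$, so that $\prec$ is transported through $\sigma$ with the correct sign; everything else reduces to the fact that the address fibres are ordered and each branch of $T$ is monotone. Finally, the lemma immediately yields the companion statement $x<y\Rightarrow\itin(x)\preceq\itin(y)$ (were $\itin(y)\prec\itin(x)$ we would get $y<x$), which is the form in which the result is usually applied.
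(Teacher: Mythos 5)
Your proof is correct. Note, though, that the paper itself gives no argument for this lemma: it is stated as well known and attributed to the cited references (Collet--Eckmann, de Melo--van Strien, and the earlier Barwell et al.\ papers), and the induction you give -- on the discrepancy, transporting $\prec$ through $\sigma$ and matching the parity flip to the orientation-reversing branch of the map -- is essentially the standard argument found there, so there is no divergence of method to report. Two small remarks. First, your parity bookkeeping uses the parity of the \emph{entire} common prefix up to the first disagreement; this is the correct (and standard) convention, and it is what makes the inductive step work -- the paper's own statement of the ordering writes $\restr{\vct{s}}{k-1}$ where its indexing conventions would call for $\restr{\vct{s}}{k}$, so your reading is the intended one. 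Second, you rightly observe that no symbol $C$ can occur in the common prefix (if $\add(f^j(x))=\add(f^j(y))=C$ for $j$ before the discrepancy, the two itineraries would coincide from index $j$ on), so the parity is well defined at every stage of the induction; and your closing caveat that only the unimodal structure (ordered address fibres, monotone branches) is used correctly explains why the lemma, though stated for a ``continuous map,'' is really a statement about unimodal maps with the symbolic dynamics set up as in Section \ref{symdyn}.
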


Due to the fact that tent maps with slope $\lambda\in(1,2]$ have no homterval, every point has a unique itinerary, and thus Lemma \ref{lem:itinorder} can be strengthened in the following way:

\begin{lem}\label{lem:itininjective}
For a tent map $T_{\lambda}$ with slope $\lambda\in(1,2]$ the itinerary map is injective. Thus for $x,y\in I$, $x<y$ if and only if \emph{$\itin(x)\prec \itin(y)$}.
\end{lem}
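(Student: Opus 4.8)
The plan is to combine the previously stated Lemma \ref{lem:itinorder} with the fact that tent maps of slope $\lambda \in (1,2]$ have no homterval in order to upgrade the one-directional implication ``$\itin(x) \prec \itin(y) \Rightarrow x < y$'' into an ``if and only if''. First I would observe that it suffices to prove injectivity of $\itin$: once we know that $x \neq y$ implies $\itin(x) \neq \itin(y)$, the parity lexicographic order $\prec$ is a total (linear) order on the image of $\itin$, so for distinct $x,y$ exactly one of $\itin(x) \prec \itin(y)$ and $\itin(y) \prec \itin(x)$ holds; then Lemma \ref{lem:itinorder} gives that the first forces $x<y$ and, symmetrically, the second forces $y < x$. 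Running this argument for both $x<y$ and $y<x$ yields the stated equivalence, and the case $x=y$ is trivial since $\itin$ is a function.

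To prove injectivity, I would argue by contraposition: suppose $x < y$ but $\itin(x) = \itin(y)$. I claim this forces the closed interval $J = [x,y]$ to be a homterval, contradicting the no-homterval property of tent maps with slope in $(1,2]$ (noted explicitly in the excerpt, since every subinterval expands until it contains $c$). The key sub-step is to show that for every $n \geq 0$ the whole image $T^n(J)$ avoids the critical point $c$. For this I would first show, by induction on $n$, that $T^n\restriction_{J}$ is monotone (hence $T^n(J)$ is again a closed interval with endpoints $T^n(x)$ and $T^n(y)$): if at some stage $T^n(J)$ contained $c$ in its interior, then $\add$ would take different values on points of $T^n(J)$ on either side of $c$, so two points of $J$ would have itineraries differing in the $n$-th coordinate, contradicting $\itin(x) = \itin(y)$ (note $\itin$ is constant on $J$, since if $\itin(x)=\itin(y)$ and $x<z<y$ then by Lemma \ref{lem:itinorder} $\itin(z)$ cannot be $\prec$ or $\succ$ either of them, forcing equality). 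Similarly $c$ cannot be an endpoint of $T^n(J)$ interior to $J$'s image in a way that breaks constancy — and if $T^n(J) = \{c\}$ degenerates, that again contradicts $x<y$ together with injectivity of $T^n$ on monotone pieces; in any case $c \notin T^n(J)$ for all $n$, so $J$ is a homterval.

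The main obstacle — really the only delicate point — is the careful bookkeeping in that induction: one must rule out not just $c$ lying in the open interior of $T^n(J)$ but also pathological configurations where $c$ is an endpoint, and one must be sure that $\itin$ is genuinely constant on all of $[x,y]$ and not merely equal at the two endpoints. Both are handled cleanly by repeated appeal to Lemma \ref{lem:itinorder} (the squeeze $\itin(x) \preceq \itin(z) \preceq \itin(y)$ for $x \le z \le y$ forces $\itin(z)=\itin(x)$), so once that observation is in place the proof is short. An alternative, essentially equivalent route avoids the induction: the set $\itin^{-1}(\itin(x))$ is, by Lemma \ref{lem:itinorder}, an interval (it is order-convex), and if it were non-degenerate it would be a homterval by the same coordinate-wise argument; either phrasing works, and I would present whichever reads more cleanly alongside the surrounding material.
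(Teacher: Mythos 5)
Your argument is correct and follows the same route the paper takes: the paper gives no detailed proof, only the remark that since tent maps with slope $\lambda\in(1,2]$ have no homterval every point has a unique itinerary, so Lemma \ref{lem:itinorder} upgrades to the stated equivalence, which is exactly what you flesh out (constancy of the itinerary on $[x,y]$ via the order squeeze, hence $c\notin T^n([x,y])$ for all $n$, contradicting the no-homterval property). The only cosmetic point is that your induction on monotonicity of $T^n\restriction_{[x,y]}$ is unnecessary: constancy of the itinerary already rules out $c\in T^n([x,y])$ directly, since otherwise every point of $[x,y]$ would have address $C$ at coordinate $n$, forcing $T^n$ to be constant on a nondegenerate interval, which is impossible for a piecewise linear map of slope $\pm\lambda^n$.
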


For $x\in I$ and $N\in\nat$, define $I_N(x):=\{y\in I\ :\ \restr{\itin(y)}{N}=\restr{\itin(x)}{N}\}$.

\begin{lem}\label{lem:itinint}
For a tent map $T_{\lambda}$ with slope $\lambda\in(1,2]$, let $x\in I$ and $N\in\nat$. Then $I_N(x)$ is an interval in $I$. Moreover if $f^n(x)=c$ for some $n\leq N$, then $I_N(x)=\{x\}$, otherwise $I_N(x)$ is an open interval.
\end{lem}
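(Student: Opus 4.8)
The plan is to describe $I_N(x)$ entirely in terms of itineraries and then extract its topology from Lemma~\ref{lem:itininjective} together with one short combinatorial property of the parity lexicographic order. Since $\itin(y)=\bigl(\add(y)\,\add(T(y))\,\add(T^2(y))\cdots\bigr)$, the condition $y\in I_N(x)$ says precisely that $T^k(y)$ and $T^k(x)$ lie in the same block of the address partition $\{[0,c),\{c\},(c,1]\}$ for every $k$ indexing a letter recorded in $\restr{\itin(x)}{N}$.

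First I would treat the degenerate case, in which $\restr{\itin(x)}{N}$ contains the letter $C$, say $\add(T^n(x))=C$, i.e.\ $T^n(x)=c$. Then every $y\in I_N(x)$ has $\add(T^n(y))=C$, hence $T^n(y)=c=T^n(x)$, while the earlier letters give $\restr{\itin(y)}{n}=\restr{\itin(x)}{n}$; concatenating, $\itin(y)=\restr{\itin(y)}{n}\,{}^{\smallfrown}\itin(c)=\itin(x)$, so $y=x$ by injectivity of $\itin$ (Lemma~\ref{lem:itininjective}) and $I_N(x)=\{x\}$.

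Next, in the remaining case, where $\restr{\itin(x)}{N}$ contains no $C$ (so $T^k(x)\neq c$ for each relevant $k$), I would show $I_N(x)$ is both open and convex. For openness: every $y\in I_N(x)$ likewise avoids $c$ at the relevant iterates (its recorded addresses match those of $x$, none being $C$), so each coordinate map $y\mapsto\add(T^k(y))$ is a composite of the continuous map $T^k$ with $\add$, which is continuous away from $c$; hence $y\mapsto\restr{\itin(y)}{N}$ is continuous, and therefore --- $\Omega^N$ being discrete --- locally constant at every point of $I_N(x)$, so $I_N(x)$ is a neighbourhood of each of its points, i.e.\ open in $I$. For convexity: if $y,z\in I_N(x)$ and $y<w<z$, then Lemma~\ref{lem:itininjective} gives $\itin(y)\prec\itin(w)\prec\itin(z)$, and I would appeal to the ``betweenness'' property of $\prec$, namely that \emph{if two sequences have the same first $N$ letters then so does any sequence strictly between them in $\prec$}, to conclude $\restr{\itin(w)}{N}=\restr{\itin(x)}{N}$, hence $w\in I_N(x)$. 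A nonempty open subinterval of $[0,1]$ is an open interval, which completes this case.

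The only step carrying real content --- and the point I expect to have to argue with some care --- is the betweenness property of $\prec$; I would prove it by contradiction. If the discrepancy $j$ of $\itin(y)$ and $\itin(w)$ were less than $N$, then, since $\itin(y)$ and $\itin(z)$ agree that far, $\itin(w)$ would also first differ from $\itin(z)$ at position $j$, after the very same preceding word; feeding this fixed word into the parity rule forces the two relations $\itin(y)\prec\itin(w)$ and $\itin(w)\prec\itin(z)$ to order the $j$th letter of $\itin(w)$ against the common $j$th letter of $\itin(y)$ and $\itin(z)$ in opposite ways in the order $0<C<1$ --- impossible, so in fact $j\ge N$ and $\restr{\itin(w)}{N}=\restr{\itin(y)}{N}$. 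Everything else is routine bookkeeping about open, half-open and degenerate subintervals of $[0,1]$. (Alternatively one can avoid symbolic dynamics and induct on $N$: on $I_N(x)$ each iterate keeps $T^{k-1}(y)$ inside one of $[0,c]$, $[c,1]$, so $\restr{T^N}{I_N(x)}$ is a composition of monotone maps, hence a homeomorphism of $I_N(x)$ onto an interval, and $I_{N+1}(x)=\bigl(\restr{T^N}{I_N(x)}\bigr)^{-1}(P)$ with $P$ the block containing $T^N(x)$; carrying this monotonicity invariant through the induction is then the crux, playing exactly the role that betweenness plays above.)
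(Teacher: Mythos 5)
Your argument is correct, but note that the paper never proves Lemma \ref{lem:itinint}: it is stated as a well-known fact with citations, so there is no in-paper proof to compare against. Measured against the standard folklore argument, which is essentially your parenthetical alternative (induct on $N$, using that on a cylinder set each iterate $T^k$ stays inside one lap of $T$, so $\restr{T^N}{I_N(x)}$ is monotone and $I_{N+1}(x)$ is the preimage of an interval), your main route is genuinely different and works: the degenerate case is handled cleanly by injectivity of $\itin$; openness follows from local constancy of $y\mapsto\restr{\itin(y)}{N}$ away from preimages of $c$; and convexity is reduced to the betweenness property of $\prec$, which you prove correctly (the two comparisons at the common discrepancy position, with the same prefix parity, are incompatible). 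What the symbolic route buys is that it uses only the order-theoretic machinery the paper has already set up (Lemma \ref{lem:itininjective} plus the parity rule), at the price of having to establish betweenness; the monotonicity induction avoids that lemma but re-does the piecewise-monotonicity bookkeeping. Two small points of alignment with the statement as printed: with the paper's convention $\restr{\vct{s}}{N}=s_0s_1\dots s_{N-1}$, your degenerate branch covers $T^n(x)=c$ for $n<N$ only, and that is in fact the correct reading --- if $T^N(x)=c$ with no earlier visit to $c$, then $I_N(x)$ is still a nondegenerate interval (e.g.\ $T(x)=c$ and $N=1$ gives $I_1(x)=[0,c)$), so the lemma's ``$n\leq N$'' should be ``$n<N$'' and your proof establishes the corrected statement; likewise ``open interval'' must be read as open in the subspace $I$ (e.g.\ $I_N(0)=[0,a)$), which your closing bookkeeping remark already accommodates.
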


The following two results are technical observations about the relationship between sequences in $\Omega^{\nat}$ and itineraries of points.

\begin{lem}\label{lem:limitadmis}
Suppose that the sequence $\vct{s}\in\Omega^{\nat}$ is either $K=\itin^-(T(c))$ or a limit itinerary of some $x\in[0,T(c))$, either upper or lower. Then for any $n\in\nat$, $\restr{\vct{s}}{n}$ is the initial $n$-segment of some actual itinerary \emph{$\itin(y)$} for some $y\in [0,T(c))$.
\end{lem}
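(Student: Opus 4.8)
The plan is to argue straight from the definition of the limit itineraries as limits in the metric space $(\Omega^{\nat},d)$, without invoking the admissibility conditions at all. Observe first that in each of the three cases $\vct{s}$ is a genuine one-sided limit of itineraries of honest points. If $\vct{s}=\itin^+(x)$ then $\vct{s}=\lim_{y\downarrow x}\itin(y)$ with $x<T(c)$; if $\vct{s}=\itin^-(x)$ then $\vct{s}=\lim_{y\uparrow x}\itin(y)$, which only makes sense when $0<x<T(c)$; and if $\vct{s}=K$ then $\vct{s}=\itin^-(T(c))=\lim_{y\uparrow T(c)}\itin(y)$, where $T(c)=\lambda/2>0$. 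Write $x_0$ for the point at which the relevant limit is taken (so $x_0=x$, or $x_0=T(c)$ in the case $\vct{s}=K$); in every case the approach to $x_0$ is from the side which stays inside $[0,T(c))$.

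First I would fix $n\in\nat$ and unwind the definition of convergence in $(\Omega^{\nat},d)$: with $\epsilon=2^{-n}$ there is a $\delta>0$ such that $d(\itin(y),\vct{s})<2^{-n}$ for every $y$ in the appropriate one-sided $\delta$-neighbourhood of $x_0$. Since two sequences at $d$-distance less than $2^{-n}$ agree on at least their first $n$ symbols (indeed on their first $n+1$), this forces $\restr{\itin(y)}{n}=\restr{\vct{s}}{n}$ for every such $y$.

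It then remains only to exhibit one such $y$ lying in $[0,T(c))$, and this is the sole point needing any care. In the upper-limit case the eligible $y$ fill $(x_0,x_0+\delta)$, and since $x_0<T(c)$ the intersection $(x_0,x_0+\delta)\cap[0,T(c))$ is a non-degenerate open interval. In the two lower-limit cases the eligible $y$ fill $(x_0-\delta,x_0)$ with $x_0\in(0,T(c)]$, and intersecting with $[0,T(c))$ again leaves a non-degenerate open interval; the right endpoint $T(c)$ is excluded for free because the approach is strictly from below — even when $\vct{s}=K$ and $x_0=T(c)$. Choosing any $y$ from the resulting non-degenerate interval gives $y\in[0,T(c))$ with $\restr{\vct{s}}{n}=\restr{\itin(y)}{n}$, which is precisely the claim. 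I do not anticipate a genuine obstacle: the content of the lemma is merely that every finite initial segment of a limit itinerary is realised by the itinerary of a nearby honest point, and the mild subtlety that the point can be taken strictly below $T(c)$ is automatic from the one-sided nature of all the limits in play — which is why this direct argument is preferable to producing $y$ via the admissibility conditions, a route that would only give $y\in[0,T(c)]$.
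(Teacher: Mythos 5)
Your argument is correct and is essentially the paper's own proof: both simply unwind the definition of the limit itinerary as a one-sided limit in $(\Omega^{\nat},d)$, so that points sufficiently close to $x$ (or to $T(c)$) on the approaching side have itineraries agreeing with $\vct{s}$ on the first $n$ symbols, and such points automatically lie in $[0,T(c))$. Your version just spells out the $\epsilon$--$\delta$ bookkeeping and the endpoint exclusion more explicitly than the paper does.
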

\begin{proof}
$\vct{s}$ is the limit point of itineraries $\itin(z)$, as $z\rightarrow x$ from either above or below, depending upon which limit we are considering. Thus as the points $z$ get closer to $x$, their itineraries will agree with $\vct{s}$ in ever larger initial segments, so for some $y\in [0,T(c))$, $\itin(y)$ will have $\vct{s}$ as its initial $n$-segment.
\end{proof}

\begin{lem}\label{lem:periodadmis}
Suppose that the tent map $T:I\rightarrow I$ has a critical point $c$ with period $m\geq3$ and kneading sequence $K$.
\begin{enumerate}
    \item Suppose that for some $N\in\nat$, a word $\vct{s}\in\{0,1\}^{N}$ is not the initial $N$-segment of the itinerary of any point in $[0,T(c)]$. Then there is a segment $\vct{r}$ of $\vct{s}$, with $|\vct{r}|=j\leq m$ for which $\vct{r}\succ \restr{K}{j}$. \label{lem:periodadmisA}
    \item Suppose that $\sigma^k(\vct{t})\succ K$ for some sequence $\vct{t}\in\{0,1\}^{\nat}$ and for some $k\geq0$. Then there is a segment $\vct{r}$ of $\vct{t}$, with $|\vct{r}|=j\leq m$ for which $\vct{r}\succ \restr{K}{j}$. \label{lem:periodadmisB}
\end{enumerate}
\end{lem}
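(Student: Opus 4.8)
The plan is to exploit the periodicity of the kneading sequence. I would first record two facts, valid for a tent map $T$ whose critical point $c$ has (minimal) period $m\ge 3$. \emph{(a) $K$ is periodic with period $m$.} Since $T(c)$ is periodic of period $m$ and $T^{m-1}$ sends a one-sided neighbourhood of $T(c)$ into a one-sided neighbourhood of $c=T^m(c)$, a direct check gives $K_{j+m}=K_j$ for all $j$; write $\kappa:=\restr{K}{m}$, so $K=\kappa^{\infty}$. \emph{(b) $\kappa$ is even.} Writing $a_j:=\add(T^j(c))$ for $1\le j\le m-1$, we have from $K=\sigma(\itin^+(c))$ that $\kappa=a_1a_2\cdots a_{m-1}\,\itin^+(c)_m$, where $\itin^+(c)_m\in\{0,1\}$ records the side of $c$ to which $T^m$ maps a small right-neighbourhood of $c$; this side is determined by the sign of the slope of $T^m$ just to the right of $c$, which is $(-1)^{p}$ with $p$ the number of $1$'s in the word $1a_1a_2\cdots a_{m-1}$ (a $-\lambda$ from the right branch at $c$ and one for each $a_j=1$). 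Hence $\itin^+(c)_m=1$ exactly when $a_1\cdots a_{m-1}$ is odd, and a two-case check shows $\kappa=a_1\cdots a_{m-1}\,\itin^+(c)_m$ has an even number of $1$'s in either case. (Both facts are standard; see e.g.\ \cite{ColletEckmann, MilnorThurston}.)

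The combinatorial core is the following. Let $\vct{u}$ be a $\{0,1\}$-word or $\{0,1\}$-sequence which agrees with $K$ on its first $d$ entries and has $\vct{u}_d\ne K_d$, and suppose this difference makes $\vct{u}\succ K$ (respectively $\vct{u}\succ\restr{K}{|\vct{u}|}$ when $\vct{u}$ is finite). Write $d=qm+r$ with $0\le r<m$ and set $\vct{r}:=\vct{u}_{qm}\vct{u}_{qm+1}\cdots\vct{u}_d$, a block of $\vct{u}$ of length $r+1\le m$. By periodicity, $\vct{u}_{qm+\ell}=K_{qm+\ell}=K_\ell$ for $0\le\ell\le r-1$ and $K_d=K_r$, so $\vct{r}$ and $\restr{K}{r+1}$ agree in their first $r$ entries and differ at position $r$, with exactly the pair of symbols $\vct{u}_d,K_d$ occurring there. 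Their common prefix is $\restr{K}{r}$, while the common prefix of $\vct{u}$ and $K$ is $\restr{K}{d}=\kappa^{q}\restr{K}{r}$; since $\kappa$ is even these have the same parity. The parity-lexicographic rule therefore yields $\vct{r}\succ\restr{K}{r+1}$, with $|\vct{r}|=r+1\le m$.

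Part (2) then follows at once: for $\sigma^k(\vct{t})\succ K$, let $d$ be the discrepancy of $\sigma^k(\vct{t})$ and $K$ and apply the core observation with $\vct{u}=\sigma^k(\vct{t})$; the block $\vct{r}=\vct{t}_{k+qm}\cdots\vct{t}_{k+d}$ is a segment of $\vct{t}$ of length $\le m$ with $\vct{r}\succ\restr{K}{|\vct{r}|}$. For part (1), since $\vct{s}$ contains no $C$ the admissibility conditions recalled before the lemma, together with Lemma~\ref{lem:limitadmis} (which realises the borderline ``$\preceq$'' cases---initial segments of $K$ and of limit itineraries---as genuine itinerary segments), say that $\vct{s}\in\{0,1\}^{N}$ fails to be an initial $N$-segment of $\itin(x)$ for any $x\in[0,T(c)]$ precisely when $\sigma^i(\vct{s})\succ\restr{K}{N-i}$ for some $0\le i<N$. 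Fixing such an $i$, letting $d$ be the discrepancy of the words $\sigma^i(\vct{s})$ and $\restr{K}{N-i}$ (each of length $N-i$), and applying the core observation with $\vct{u}=\sigma^i(\vct{s})$ produces $\vct{r}=\vct{s}_{i+qm}\cdots\vct{s}_{i+d}$, a segment of $\vct{s}$ of length $j:=r+1\le m$ with $\vct{r}\succ\restr{K}{j}$.

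The step I expect to matter most is fact (b): the evenness of the period block $\kappa$ is exactly what lets one strip whole blocks $\kappa^{q}$ off the front of a prefix without altering the parity that decides the order---without it the extracted short block could come out $\prec\restr{K}{r+1}$ instead. The remaining delicate point is the equivalence invoked at the start of part (1), matching the admissibility conditions as stated (with $\prec$ for sufficiency, $\preceq$ for necessity) to the exact assertion ``$\vct{s}$ is an initial $N$-segment of some itinerary in $[0,T(c)]$''; this is routine once Lemma~\ref{lem:limitadmis} is available.
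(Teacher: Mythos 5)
Your proof is correct and takes essentially the same route as the paper's: reduce part (1) via the admissibility conditions and Lemma \ref{lem:limitadmis} to the existence of a shift with $\sigma^i(\vct{s})\succ\restr{K}{N-i}$, then write the discrepancy as $qm+r$ and use the $m$-periodicity of $K$ to strip off whole period blocks and extract a segment of length at most $m$ that exceeds the corresponding prefix of $K$. The only real difference is that you state and prove explicitly the evenness of the period block $\restr{K}{m}$ (this is the paper's Lemma \ref{lem:evenkneadingseq}, established later by an expansion argument), a parity fact that the paper's own proof of this lemma uses tacitly when it asserts that the word following the initial $mi$-segment still dominates $\restr{\sigma^{mi}(K)}{j}$.
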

\begin{proof}
(\ref{lem:periodadmisA}): Since $\vct{s}$ is not the initial $N$-segment of the itinerary of any point in $[0,T(c)]$, by the admissibility conditions in Definition \ref{def:admiscond}, there is a $k\in\nat$ for which
\[\sigma^k(\vct{s})\succeq\restr{K}{N-k}.\]
Furthermore, $\sigma^i(K)$ is a limit itinerary of $f^{i+1}(c)$ for every $i\geq0$%\footnote{I think this is obvious, but if not I'll put it in a separate lemma.}
, so since $\vct{s}$ is not the initial $N$-segment of the itinerary of any point in $[0,T(c)]$, by Lemma \ref{lem:limitadmis} it is not the initial $N$-segment of a limit itinerary of any point in $[0,T(c))$, nor of $K$, so in particular there must be some $k\geq0$ for which
\[\sigma^k(\vct{s})\succ\restr{K}{N-k}.\]
If the discrepancy between $\sigma^k(\vct{s})$ and $K$ is $q\leq m$, then setting $\vct{r}=\restr{\vct{s}}{q}$ we get the required result. So suppose that the discrepancy between $\sigma^k(\vct{s})$ and $K$ is $n=mi+j$, for $j<m$ and $i>0$. Then the initial $mi$-segment of $\sigma^k(\vct{s})$ is identical to that of $K$, and there is a word $\vct{t}$ which immediately follows this initial $mi$-segment of $\sigma^k(\vct{s})$, with $|\vct{t}|=j$, such that
\[\vct{t}\succ\restr{\sigma^{mi}(K)}{j}.\]
But $K$ is periodic with period $m$, so $\sigma^{mi}(K)=K$. Thus setting $\vct{r}=\vct{t}$ we get
\[\vct{r}\succ\restr{K}{j}.\]

(\ref{lem:periodadmisB}):% Since $\vct{s}$ is not the itinerary of any point in $I$, by the admissibility conditions in Definition \ref{def:admiscond}, there is a $k\in\nat$ for which
%\[\sigma^k(\vct{s})\succeq K.\]
%Furthermore, since $\sigma^k(\vct{s})\neq K$ for every $k\in\nat$, there must be some $k\geq0$ for which
%\[\sigma^k(\vct{s})\succ K.\]
If the discrepancy between $\sigma^k(\vct{t})$ and $K$ is $q\leq m$, then setting $\vct{r}=\restr{\vct{t}}{q}$ we get the required result. So suppose that the discrepancy between $\sigma^k(\vct{t})$ and $K$ is $n=mi+j$, for $j<m$ and $i>0$. Then the initial $mi$-segment of $\sigma^k(\vct{t})$ is identical to that of $K$, and there is a word $\vct{u}$ which immediately follows this initial $mi$-segment of $\sigma^k(\vct{t})$, with $|\vct{u}|=j$, such that
\[\vct{u}\succ\restr{\sigma^{mi}(K)}{j}.\]
But $K$ is periodic with period $m$, so $\sigma^{mi}(K)=K$. Thus setting $\vct{r}=\vct{u}$ we get
\[\vct{r}\succ\restr{K}{j}.\]
\end{proof}

\begin{thm}[Pre-Critical Admissibility]\label{precritadmit}
Suppose that $T:I\rightarrow I$ is a tent map with critical point $c$ and kneading sequence $K$. For any $n\in\nat$ and $\vct{s}\in\{0,1\}^{n}$, \emph{$\vct{s}^{\smallfrown}\itin(c)$} is the itinerary of some pre-critical point in $[0,T(c)]$ if and only if one of the following three conditions hold:
\begin{enumerate}
	\item $\vct{s}=0^n$;
	\item $\vct{s}\in\{0,1\}^n$ and \emph{$\sigma^k(\vct{s}^{\smallfrown} \itin(c))\prec K$} for every $0\leq k\leq n$;
	\item $c$ is in a period-$m$ orbit with itinerary $(C^{\smallfrown}\vct{t})^{\infty}$ and $\vct{s}=\sigma^k(\vct{t})$ for some $0\leq k<m-1$.
\end{enumerate}
\end{thm}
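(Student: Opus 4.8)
The plan is to derive the theorem from the admissibility conditions of Definition~\ref{def:admiscond}, treating the non-periodic and periodic critical point separately since condition~(3) occurs only in the periodic case. Write $\vct{w}=\vct{s}^{\smallfrown}\itin(c)$; because $\itin(c)$ begins with $C$ we have $w_{n}=C$, so $\vct{w}=\itin(x)$ forces $T^{n}(x)=c$, and hence ``$\vct{w}$ is the itinerary of a pre-critical point in $[0,T(c)]$'' is exactly ``$\vct{w}=\itin(x)$ for some $x\in[0,T(c)]$''. Three elementary facts will be used throughout. First, $K$ contains no $C$ (it is a limit itinerary) and its first symbol is $1$ (points just below $T(c)=\lambda/2>1/2$ have address $1$); hence $\itin(c)\prec K$, and for $0\le i<n$ the shift $\sigma^{i}(\vct{w})=\sigma^{i}(\vct{s})^{\smallfrown}\itin(c)$ has a $C$ in position $n-i\ge 1$, so $\sigma^{i}(\vct{w})\neq K$. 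Second, $\itin$ is strictly order-preserving (Lemma~\ref{lem:itininjective}), so from $K=\itin^{-}(T(c))$ one gets $\itin(y)\preceq K$ for every $y<T(c)$, while $\itin(T(c))=K$ when $c$ is not periodic and $\itin(T(c))\succ K$ when it is (the latter because $\itin(T(c))$ carries a $C$ but $K$ does not, while $K\preceq\itin(T(c))$ by the order argument). Third, $T([0,T(c)])\subseteq[0,T(c)]$ and the orbit of $c$ lies in $[T^{2}(c),T(c)]\subseteq[0,T(c)]$.

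For the implication ``$\Leftarrow$'' I treat the three cases. If (1) holds then $\vct{s}=0^{n}$ and the point $c/\lambda^{n}\in[0,T(c))$ has $\itin(c/\lambda^{n})=\vct{w}$, by an immediate induction on $n$. If (2) holds then $\sigma^{i}(\vct{w})\prec K$ for $0\le i<n$ and $\sigma^{n}(\vct{w})=\itin(c)$, which is precisely the second clause of the admissibility condition; so $\vct{w}$ is admissible, realised by some $x\in[0,T(c)]$ that is pre-critical because $w_{n}=C$. If (3) holds, a direct computation shows $\vct{w}=\sigma^{k}(\vct{t})^{\smallfrown}\itin(c)=\sigma^{k+1}(\itin(c))=\itin(T^{k+1}(c))$, and $T^{k+1}(c)$ lies in the orbit of $c$, hence in $[0,T(c)]$, so $\vct{w}$ is realised.

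For ``$\Rightarrow$'', suppose $\vct{w}=\itin(x)$ with $x\in[0,T(c)]$ and $T^{n}(x)=c$, and assume (1) fails, i.e.\ $\vct{s}\neq 0^{n}$; I must deduce (2) or (3). Being an itinerary, $\vct{w}$ does not violate admissibility, so $\sigma^{i}(\vct{w})\preceq K$ for $0\le i<n$---from the first clause of that condition if it applies, otherwise from the second clause, since the least index at which a shift of $\vct{w}$ equals $\itin(c)$ is $n$ (earlier shifts begin with a symbol of $\vct{s}\in\{0,1\}$, not $C$), so that clause yields the inequality at least for $0\le i<n$. Together with $\sigma^{i}(\vct{w})\neq K$ this upgrades the inequalities to $\sigma^{i}(\vct{w})\prec K$ for $0\le i<n$, and since also $\sigma^{n}(\vct{w})=\itin(c)\prec K$, condition~(2) holds unless some shift $\sigma^{k}(\vct{w})$ with $0\le k\le n$ satisfies $\sigma^{k}(\vct{w})\succeq K$. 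This is the only obstruction to~(2); the remaining work is to show it forces~(3), and cannot occur when $c$ is aperiodic.

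So assume $\sigma^{k}(\vct{w})\succeq K$ for some $0\le k\le n$. Then $k\neq n$ (as $\sigma^{n}(\vct{w})=\itin(c)\prec K$), and $\sigma^{k}(\vct{w})$ has a $C$ so $\sigma^{k}(\vct{w})\succ K$ strictly; writing $\sigma^{k}(\vct{w})=\itin(T^{k}(x))$ with $T^{k}(x)\in[0,T(c)]$ and using $\itin(y)\preceq K$ for $y<T(c)$, we get $T^{k}(x)=T(c)$. If $c$ is not periodic this contradicts $\itin(T(c))=K$, so no such $k$ exists and~(2) holds (with~(3) vacuous). If $c$ is periodic of period $m\ge 3$ with $\itin(c)=(C^{\smallfrown}\vct{t})^{\infty}$, then since $c=1/2$ is the unique $T$-preimage of $T(c)$, the case $k\ge1$ would give $T^{k-1}(x)=c$, hence $w_{k-1}=C$ with $k-1<n$, contradicting $\vct{s}\in\{0,1\}^{n}$; so $k=0$, $x=T(c)$, and $\vct{w}=\itin(T(c))=\sigma(\itin(c))=\vct{t}^{\smallfrown}\itin(c)$. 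Matching the first occurrence of $C$ in $\vct{w}=\vct{s}^{\smallfrown}\itin(c)$ then forces $n=m-1$ and $\vct{s}=\vct{t}=\sigma^{0}(\vct{t})$, which is~(3). I expect this periodic step to be the main obstacle: one must extract, from only the ``does-not-violate'' inequality together with the bookkeeping of where the symbol $C$ can occur, that the failure of~(2) pins $x$ down to $T(c)$. Lemmas~\ref{lem:limitadmis} and~\ref{lem:periodadmis}, which govern how admissibility of finite segments and comparison with the truncations $\restr{K}{j}$ behave under $\sigma$ for periodic $c$, are the tools I would invoke to make the control of shifts of $\vct{w}$ past position $n$ fully rigorous.
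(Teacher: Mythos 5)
Your argument is correct, but it takes a genuinely different route from the paper's in both directions, and the comparison is worth recording. For the sufficiency of condition (2) you simply instantiate the pre-critical clause of Definition \ref{def:admiscond} (the shift $\sigma^n(\vct{s}^{\smallfrown}\itin(c))=\itin(c)$ together with $\sigma^i\prec K$ for $i<n$), whereas the paper reproves this case constructively: it takes the maximal interval of points whose itineraries begin with $\vct{s}^{\smallfrown}\diamond$ (via Lemma \ref{lem:itinint}) and shows that one endpoint hits $c$ at time $n$; your version is shorter but leans entirely on the quoted admissibility statement, while the paper's is self-contained. For necessity the paper assumes not-(1) \emph{and} not-(3) and derives (2), quoting $\preceq K$ from Definition \ref{def:admiscond} and excluding equality via injectivity of $\itin$; you assume only not-(1) and split according to whether some $\sigma^k(\vct{w})$ with $k\le n$ is $\succeq K$, showing that this can happen only if $T^k(x)=T(c)$, hence (uniqueness of the preimage of $T(c)$ plus the position of the first $C$) only if $k=0$, $x=T(c)$, $n=m-1$ and $\vct{s}=\vct{t}$, which is exactly clause (3) when $c$ is periodic and a contradiction when it is not. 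This decomposition makes explicit \emph{why} clause (3) is needed: for periodic $c$ one has $\itin(T(c))\succ K$, so the quoted necessary admissibility condition does not literally apply at $x=T(c)$, the very point your $\succeq$ branch isolates. Two blemishes in your write-up: first, the opening of your necessity step is internally inconsistent --- having claimed $\sigma^i(\vct{w})\preceq K$ for all $i<n$ straight from Definition \ref{def:admiscond}, condition (2) would follow outright and your ``unless'' branch would be vacuous (and the claim is in fact false at $x=T(c)$ for periodic $c$); the clean statement, which your later argument actually supplies, is trichotomy of $\prec$ plus the order fact $\itin(y)\preceq K$ for $y<T(c)$, so the definition-quote can be dropped entirely. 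Second, the closing appeal to Lemmas \ref{lem:limitadmis} and \ref{lem:periodadmis} is unnecessary: no shift of $\vct{w}$ beyond position $n$ enters either the statement or your argument.
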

\begin{proof}
Sufficiency: we consider each case individually.

For case (1), if $\vct{s}=0^n$, then $(\vct{s}^{\smallfrown} \itin(c))$ is the itinerary of the pre-critical point, obtained by taking pre-images of $c$ within $(0,1/2)$ for $n$ pre-images.

For case (2), since $\sigma^k(\vct{s}^{\smallfrown} \itin(c))\prec K$ for every $0\leq k\leq n$, we know by the admissibility conditions that $\vct{s}$ is admissible, so for $\diamond\in\{0,1,C\}$, $(\vct{s}^{\smallfrown}\diamond)$ is also admissible. If $(\vct{s}^{\smallfrown}C)$ is admissible, then we are done, since the only itinerary which begins with $(\vct{s}^{\smallfrown}C)$ is $(\vct{s}^{\smallfrown} \itin(c))$. So assume that for $\diamond\in\{0,1\}$, $(\vct{s}^{\smallfrown}\diamond)$ is admissible. By Lemma \ref{lem:itinint}, the points whose itineraries begin with $(\vct{s}^{\smallfrown}\diamond)$ form an interval $(a,b)\subset I$; assume that $(a,b)$ is the maximal interval containing points with such itineraries. Thus for every $x\notin[a,b]$, $\itin(x)$ does not begin with $(\vct{s}\ \diamond)$, and we claim that one of $a$ or $b$ must map to $c$ under $T^n$ (the other maps to $c$ under $T^{n-j}$ for some $1\leq j\leq n$). Indeed, since $(a,b)$ is the maximal interval admitting itineraries beginning with $(\vct{s}^{\smallfrown}\diamond)$, $c\notin T^n(a,b)$, but $c\in T^n(a-\epsilon,b+\epsilon)$ for every $\epsilon>0$, hence $c\in T^n\{a,b\}$. We conclude that one of $\itin(a)$ or $\itin(b)$ is the sequence $(\vct{s}^{\smallfrown} \itin(c))$.

For case (3). if $c$ is in a period-$m$ orbit with itinerary $(C^{\smallfrown}\vct{t})^{\infty}$ and $\vct{s}=\sigma^k(\vct{t})$ for some $0\leq k<m-1$ then clearly there is point in the orbit of $c$ with itinerary $(\vct{s}^{\smallfrown} \itin(c)) = (\vct{s}^{\smallfrown} (C\ \vct{t})^{\infty})$.

Necessity: suppose that for a pre-critical point $p$ with $\itin(p)=(\vct{s}^{\smallfrown} \itin(c))$, $\vct{s}\in\{0,1\}^{n}$ is not of the form described in $(1)$ or $(3)$ (which are both instances of pre-critical points). We know that by Definition \ref{def:admiscond}, $\sigma^k(\itin(p))\preceq K$ for every $k\geq0$, so suppose that $\sigma^k(\itin(p))= K$ for some $0\leq k\leq n$. Then $\sigma^k(\itin(p)) = \itin(T^k(p)) = K$, and since the itinerary map is one-to-one by Lemma \ref{lem:itininjective}, $T^k(p)$ must equal the only point whose itinerary can be $K$, which by the definition of $K$ is $T(c)$. But then $T^{k-1}(p)=c=T^n(p)$, and since $k\leq n$ we have that $c$ is periodic -- a contradiction. Thus we are forced to conclude that $\sigma^k(\vct{s}^{\smallfrown} \itin(c))\prec K$ for all $0\leq k\leq n$.
\end{proof}

\section{A Tent Map Counter Example}\label{sec:countereg}

The following theorem is from \cite{Barwell2} and states necessary and sufficient conditions, in terms of itineraries, for one point to be in the $\omega$-limit set of another, for a class of interval map. Note that a \textit{pre-periodic} point can be either a periodic point or one which maps onto a periodic point. In other words, we allow the pre-periodic segment of a pre-periodic point's orbit to be empty.

\begin{thm}\label{thm:symomega}
Suppose that $f:I\rightarrow I$ is an interval map with no homterval. For $x,y\in I$, either
\begin{enumerate}
	\item $x$ is pre-periodic, in which case $y\in\omega(x)$ if and only if arbitrarily long initial segments of \emph{$\itin(y)$} occur infinitely often in \emph{$\itin(x)$}, or
	\item $x$ is not pre-periodic, in which case $y\in\omega(x)$ if and only if arbitrarily long initial segments of either \emph{$\itin^+(y)$} or \emph{$\itin^-(y)$} occur infinitely often in \emph{$\itin(x)$}.
\end{enumerate}
\end{thm}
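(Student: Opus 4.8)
The plan is to prove this by relating the dynamics of $f$ near a point $y$ to the combinatorics of how initial segments of $y$'s itinerary appear in the itinerary of $x$. The central tool is Lemma \ref{lem:itinint}: the set $I_N(y)$ of points agreeing with $y$ in their first $N$ itinerary symbols is an interval, and these intervals shrink to $\{y\}$ as $N\to\infty$ (provided $y$ is not pre-critical; the pre-critical case is handled separately since then $I_N(y)=\{y\}$ eventually). Since $f$ has no homterval, the diameters of the $I_N(y)$ tend to zero, so "$y\in\omega(x)$" is equivalent to "$f^k(x)\in I_N(y)$ for infinitely many $k$, for every $N$." The task is thus to translate the membership $f^k(x)\in I_N(y)$ into a statement purely about itineraries, i.e. about which length-$N$ words occur as $N$-segments of $\itin(x)$ starting at position $k$.

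First I would dispose of the easy direction and the pre-critical subtleties. If $y$ itself is pre-critical, then $I_N(y)=\{y\}$ for large $N$, and $\itin(y)$ is the unique itinerary containing the word $\restr{\itin(y)}{N}$; here $y\in\omega(x)$ iff this word recurs infinitely often in $\itin(x)$, which is case (1)/(2) read directly. For the generic case, one direction is routine: if $f^k(x)\in I_N(y)$ then the $N$-segment of $\itin(x)$ beginning at $k$ equals $\restr{\itin(y)}{N}$ exactly — but we must be careful, because $I_N(y)$ is a nondegenerate interval and its endpoints may have itineraries differing from $\itin(y)$; what is always true is that the segment agrees with $\restr{\itin(y)}{N}$ on the interior, and for the endpoint behaviour one passes to the one-sided limit itineraries $\itin^\pm(y)$. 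This is exactly why the statement is phrased with $\itin^+(y)$ and $\itin^-(y)$ in case (2).

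The converse is where the real work lies, and I expect it to be the main obstacle. Suppose arbitrarily long initial segments of, say, $\itin^+(y)$ occur infinitely often in $\itin(x)$; I want to produce infinitely many $k$ with $f^k(x)$ close to $y$. Fix $N$; by hypothesis there are infinitely many $k$ with $\restr{\itin(f^k(x))}{N} = \restr{\itin^+(y)}{N}$. The point is to show that $f^k(x)$ then lies in a small neighbourhood of $y$: since $\itin^+(y)=\lim_{z\downarrow y}\itin(z)$, the word $\restr{\itin^+(y)}{N}$ is realised by points $z$ just above $y$, so the interval of points whose itinerary starts with this word is $I_N$ of such a $z$, an interval abutting $y$ on the right (or a degenerate situation), and hence contained in a small neighbourhood of $y$ once $N$ is large — again using that $\mesh$ of the $N$th itinerary partition goes to $0$ because there are no homtervals. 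Letting $N\to\infty$ along the infinitely-often-occurring segments produces a subsequence $f^{k_j}(x)\to y$, so $y\in\omega(x)$. The splitting into the pre-periodic case (1) versus the non-pre-periodic case (2) reflects whether $\itin(x)$ is eventually periodic: when $x$ is pre-periodic, $\itin(x)$ is eventually periodic, limit itineraries of $x$ coincide with $\itin(x)$, and one may work with $\itin(y)$ directly; when $x$ is not pre-periodic one genuinely needs the one-sided itineraries of $y$ to capture the endpoint cases, and checking that no information is lost in this passage — in both directions — is the delicate bookkeeping the proof must carry out. Much of this is established in \cite{Barwell2}; here I would cite it and sketch the itinerary-partition argument above as the conceptual core.
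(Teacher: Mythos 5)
This theorem is not proved in the paper at all: the text introduces it with ``The following theorem is from \cite{Barwell2}'' and simply imports it, so there is no in-paper argument to compare your proposal against. Judged on its own merits, your outline is the natural one and matches the standard cylinder-interval argument on which the cited proof is based: itinerary cylinders are intervals (Lemma \ref{lem:itinint}), absence of homtervals forces their diameters to zero, and membership of $f^k(x)$ in small one-sided neighbourhoods of $y$ is translated into occurrences of long prefixes of $\itin(y)$ or $\itin^{\pm}(y)$ in $\itin(x)$. Two points deserve tightening. First, your framing equivalence ``$y\in\omega(x)$ iff $f^k(x)\in I_N(y)$ for infinitely many $k$, for every $N$'' is false as stated: the orbit can accumulate on $y$ entirely from the side whose limit itinerary disagrees with $\itin(y)$, so $I_N(y)$ must be replaced by the cylinder of $\restr{\itin^{\pm}(y)}{N}$; you do acknowledge and effectively make this correction, but the proof should be organised around the corrected statement from the start. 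Second, your explanation of the case split (``limit itineraries of $x$ coincide with $\itin(x)$'') misses the operative mechanism: if $x$ is not pre-periodic its orbit can hit $y$ at most once (two hits would make $y$ periodic and $x$ pre-periodic), so accumulation at $y$ is via strictly one-sided approach and only $\itin^{\pm}(y)$ can be read off; whereas if $x$ is pre-periodic its orbit tail lies exactly on the periodic orbit $\omega(x)$, so the genuine itinerary $\itin(y)$ (which may contain the symbol $C$) recurs verbatim, and conversely infinitely many occurrences must sit inside the periodic tail, identifying $y$ with a point of that orbit by injectivity of the itinerary map. With those repairs, and the routine verification that $\diam I_N\to 0$ really does follow from the no-homterval hypothesis (density of pre-critical points), your sketch would fill out to a correct proof along the same lines as the one in \cite{Barwell2}.
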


This result clearly holds for tent maps whose slope $\lambda$ is in the interval $(1,2]$, as these maps have no homterval.

The main theorem in this section is in fact a family of examples which show that internal chain transitivity does not fully characterize $\omega$-limit sets in tent maps, thus providing a counter example to Conjecture \ref{conj:ICTomega_naive}. In the proof of Theorem \ref{thm:countereg} we refer to a result in \cite{ColletEckmann}, which states when a sequence of symbols is the kneading sequence of a tent map, rather than state it here explicitly, as to do so would require introducing excessive terminology.

\begin{thm}\label{thm:countereg}
For infinitely many values $\lambda\in(\sqrt{2},2)$ there is a tent map $T:I\rightarrow I$ with slope $\lambda$ for which there exists a closed, internally chain transitive set $L$ which is not the $\omega$-limit set for any point in $I$.
\end{thm}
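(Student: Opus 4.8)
The plan is to construct, for infinitely many slopes $\lambda$, a tent map $T_\lambda$ together with a finite collection of periodic orbits whose union $L$ is easily seen to be internally chain transitive but which cannot be an $\omega$-limit set because of an obstruction visible in the kneading theory. The natural candidate for $L$ is a union of two (or finitely many) periodic orbits $P_1 \cup P_2$: such a set is automatically closed and $T_\lambda$-invariant, and it is internally chain transitive precisely when one can $\delta$-jump from any point of one orbit to any point of the other and back. Since every subinterval of $I$ eventually expands to cover $c$ (no homtervals), small $\delta$-jumps can be chained freely once the two orbits are \emph{chain connected}; by choosing the two orbits so that their convex hulls overlap, or more carefully so that the expansion of $T_\lambda$ guarantees a $\delta$-pseudo-orbit from any point near $P_1$ to any point near $P_2$, internal chain transitivity of $P_1 \cup P_2$ will follow from a short argument. (One expects to need the two orbits to be ``interleaved'' on the line so that points of one are $\delta$-close to the $T_\lambda$-images of points of the other for the relevant $\delta$.)

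The heart of the matter is arranging that $L = P_1 \cup P_2$ is \emph{not} an $\omega$-limit set, and this is where Theorem~\ref{thm:symomega} and the non-recurrence of $c$ come in. If $L = \omega(x,T_\lambda)$, then by Theorem~\ref{thm:symomega} arbitrarily long initial segments of the itineraries (or limit itineraries) of points of $L$ must occur infinitely often in $\itin(x)$; in particular $\itin(x)$ must contain arbitrarily long blocks that look like the periodic itinerary of $P_1$ and arbitrarily long blocks that look like that of $P_2$. The idea is to choose the periods and the combinatorics of $P_1, P_2$, and then pick $\lambda$ so that the kneading sequence $K = K_{T_\lambda}$ \emph{forbids} any admissible sequence from containing a long $P_1$-block immediately followed (after any finite gap that stays inside $[T^2(c),T(c)]$) by a long $P_2$-block: every such transition word would have to contain a $j$-segment $\vct{r} \succ \restr{K}{j}$, violating admissibility. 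Equivalently, no point can have an orbit that accumulates on \emph{both} orbits, so no single $\omega$-limit set contains $L$, let alone equals it. Making this precise is exactly the ``admissibility obstruction'' — I would use the criterion of \cite{ColletEckmann} to pick a legitimate kneading sequence $K$ realising the desired forbidden-transition structure, with $c$ non-recurrent (e.g.\ $K$ eventually of a controlled form) so that the resulting $\lambda$ lies in $(\sqrt2, 2)$ and, crucially, shadowing fails.

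Concretely, the steps in order: (i) fix target periodic words $\vct{w}_1, \vct{w}_2 \in \{0,1\}^*$ and a ``barrier'' word $\vct{b}$ such that $\vct{b}$ appears as a $j$-segment of $\vct{w}_1^\infty {}^\frown \vct{w}_2^\infty$-type concatenations but $\vct{b} \succ \restr{K}{|\vct b|}$ for the $K$ we are about to build; (ii) invoke the Collet--Eckmann realisability criterion to produce an admissible kneading sequence $K$ (hence a slope $\lambda = \lambda(K) \in (\sqrt2,2)$) that dominates both $\vct{w}_1^\infty$ and $\vct{w}_2^\infty$ shift-wise (so each $P_i$ genuinely exists as a periodic orbit of $T_\lambda$ inside $[T^2(c),T(c)]$) yet is strictly exceeded by the barrier word $\vct{b}$; (iii) verify $L = P_1 \cup P_2$ is closed, invariant, and internally chain transitive via the no-homterval expansion argument; (iv) suppose $L = \omega(x)$, apply Theorem~\ref{thm:symomega} to force $\itin(x)$ to contain arbitrarily long $\vct{w}_1$-blocks and arbitrarily long $\vct{w}_2$-blocks infinitely often, note that any finite word interpolating between a long $\vct w_1$-block and a long $\vct w_2$-block must contain the barrier $\vct b$ as a segment, contradicting admissibility of $\itin(x)$; (v) conclude, and observe by varying the construction (e.g.\ lengthening the periods) that infinitely many distinct $\lambda$ arise. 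The main obstacle I anticipate is step~(ii): one must simultaneously (a) keep $K$ admissible/realisable, (b) have $K$ dominate the two chosen periodic words so the orbits actually live below $T(c)$, and (c) still have the barrier word strictly above the corresponding initial segment of $K$ — balancing the ``$K$ large enough'' and ``$K$ small enough'' requirements is the delicate combinatorial core, and it is here that the freedom to choose the periods $m_1, m_2$ large (and $c$ non-recurrent) will be spent.
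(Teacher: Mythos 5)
There are two genuine gaps here, and each is fatal to the plan as stated. First, a union $L=P_1\cup P_2$ of two distinct periodic orbits is \emph{never} internally chain transitive: internal chain transitivity requires the $\delta$-pseudo-orbits to consist of points of $L$ itself, and since $L$ is finite and the two orbits are disjoint and invariant, for any $\delta$ smaller than $\min\{|T(p)-q| : p\in P_1,\ q\in P_2\}>0$ there is no $\delta$-pseudo-orbit inside $L$ from $P_1$ to $P_2$. Your expansion/no-homterval argument produces pseudo-orbits through the ambient interval, not through $L$; that is ordinary chain connectedness, not the internal property. (Indeed a finite internally chain transitive set is necessarily a single periodic orbit, hence automatically an $\omega$-limit set, so no finite example can work.) Second, the admissibility obstruction you want in steps (ii) and (iv) cannot be arranged: every tent map with slope $\lambda\in(\sqrt2,2)$ is topologically transitive on its core $[T^2(c),T(c)]$, and every periodic orbit other than $\{0\}$ lies in the core, so there is a point $x$ with $\omega(x)$ equal to the whole core $\supset P_1\cup P_2$; its itinerary is admissible and contains arbitrarily long blocks approximating both periodic words together with admissible transition words. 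Hence no choice of kneading sequence that actually realises both orbits can forbid such transitions -- your conditions (b) and (c) are genuinely incompatible, not merely delicate.

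The paper's example is structured quite differently precisely to avoid both problems. Its set $L$ is infinite: it consists of pre-critical points with itineraries $B^jCAB^{\infty}$ (where $K=AB^{\infty}$, $A=10^k$, $B=110$), the critical orbit, and the period-$3$ orbit $(110)^{\infty}$ on which everything accumulates; the accumulation is what makes $L$ closed and internally chain transitive, since for large $j$ the points $B^jCAB^{\infty}$ are close to the cycle and their genuine orbits pass through $c$ and return towards the cycle. The impossibility argument is also subtler than a blanket transition ban: assuming $L=\omega(z)$, Theorem~\ref{thm:symomega} forces $\itin(z)$ to contain blocks $B^n\diamond AB^m$ with $n,m\to\infty$ separated by glue words $D_j$, and a case analysis on the first symbols of $D_j$ shows each possibility either violates the admissibility condition $\sigma^j(\kappa)\preceq K$ (a parity argument when $D_j$ starts with $0$) or forces $\omega(z)$ to contain points whose itineraries (containing $B10\diamond$ or $111$) belong to no point of $L$ -- i.e.\ the contradiction is with $\omega(z)$ being \emph{exactly} $L$, not with any orbit accumulating on both pieces of $L$. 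If you want to salvage your outline, you would need to replace the finite set by an infinite one containing connecting points (in practice, points of the critical orbit and its preimages) and to weaken the obstruction from ``no admissible sequence contains both kinds of blocks'' to ``any admissible sequence containing all the required blocks must generate limit points outside $L$,'' which is essentially the paper's argument.
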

\begin{proof}
Fix $k\in\nat,\ k\geq2$ and let $A$ be the word $10^k$ and let $B$ be the word $110$. Consider the sequence $K=A B^{\infty}$. This is the itinerary of $T(c)$ for a tent map $T$ with slope $\lambda\in(\sqrt{2},2)$ by \cite[Lemma III.1.6]{ColletEckmann}, so since it contains no symbol $C$ it is also the kneading sequence of that tent map (by the definitions of limit itinerary and kneading sequence).

Let $L$ be the set of points whose itineraries are the set $\Lambda$, where
\[
\Lambda:=\bigl\{\sigma^n(B^j C A B^{\infty})\ :\ j,n\in\nat\bigr\}.
\]
These sequences are all itineraries of points in $I$ by Definition \ref{def:admiscond} and Theorem \ref{precritadmit}. It is easy to prove that $L$ is closed and internally chain transitive. Indeed, sequences of the form $B^j C A B^{\infty}$ tend to the period-$3$ cycle as $j\rightarrow\infty$, as do sequences of the form $\sigma^n(A B^{\infty})$ as $n\rightarrow\infty$.

Suppose that $L=\omega(x)$ for some $x\in I$. Thus $T(L)=L$, so we must have that $L\subset[T^2(c),T(c)]$. We can conclude that $L=\omega(z)$, where $z\in [0,T(c)]$ and $z=T(x)$.

By Theorem \ref{thm:symomega}, we know that for $y\in I$, $y\in\omega(z)$ if and only if arbitrarily long initial segments of either the upper or lower limit itinerary of $y$ occurs infinitely often in the itinerary of $z$. Thus the itinerary of $z$ must contain infinitely many occurrences of words of the form $B^n \diamond A B^m$ for every $m,n\in\nat$, where $\diamond$ indicates a place where we are free to choose either a $0$ or a $1$ (it will be one of these infinitely often, and which one will determine whether we approximate the upper or lower limit itinerary). This forces the itinerary of $z$ to take the form
\[
\kappa=D_1 B^{n_1} \diamond A B^{m_1} D_2 B^{n_2} \diamond A B^{m_2} \ldots ,
\]
where $\{n_i\}_{i\in\nat}$ and $\{m_i\}_{i\in\nat}$ are strictly increasing sequences of positive integers, $\{D_j\}_{j\in\nat}$ are finite words, and without loss of generality $D_j$ does not begin with $B$ for any $j\in\nat$. For the sequence $\kappa$ to be the itinerary of a point $z$ as required, we need it to satisfy the admissibility conditions of Definition \ref{def:admiscond}, in particular that
\[
\sigma^j(\kappa)\preceq K\ \forall\ j\geq1.
\]
With these conditions, we consider what the first three symbols in each of the $D_j$'s can be. If the $D_j$'s begin with a particular sequence $H$ of three symbols infinitely often, then $\omega(z)$ will contain a point whose itinerary contains the sequence $B H$. We analyze each possible sequence $H$ in turn:
\begin{enumerate}
\item $D_j=0\ldots$. This case is impossible, regardless of what follows the $0$, considering the parity lexicographic order. Indeed, consider the two sequences $K$ and $\kappa$, where $D_j=0\ldots$:
\begin{align*}
K = &10^k110\ldots110110\ldots\\
\kappa =\ldots &10^k110\ldots110\underline{0}
\end{align*}
Since the number of $1$'s prior to the $\underline{0}$ is odd, we see that the iterate of $\kappa$ which starts as above does not follow the above admissibility condition.
\item $D_j=10\diamond\ldots$. This cannot occur infinitely often, else we would have a point in $\omega(z)$ whose itinerary contains the sequence $B 10\diamond$, which is none of the points in $L$.
\item $D_j=110\ldots$. This case we have excluded, since $110=B$.
\item $D_j=111\ldots$. This cannot occur infinitely often, else we would have a point in $\omega(z)$ whose itinerary contains the sequence $111$, which is none of the points in $L$.
\end{enumerate}
Thus no such (non-empty) sequence $H$ is possible infinitely often, which only leaves the possibility that $D_j=\emptyset$ a cofinite number of times. But we can also eliminate this possibility, since we have eliminated all the possible (non-empty) combinations of the three symbols (other than $B$) which can follow $B$ in $\kappa$ infinitely often. Thus no such sequence $\kappa$ exists, and we conclude that $L$ cannot be the $\omega$-limit set of any point in $I$.
\end{proof}

\begin{rem}
The initial choice of the number of $0$'s following the first $1$ in the kneading sequence in Theorem \ref{thm:countereg} gives us countably many different tent maps for which this result holds. Notice also that the critical point of these maps is not recurrent, so by \cite[Theorem 4.2]{Coven} (stated in this paper as Theorem \ref{thm:tentshad}) these maps do not have the shadowing property (unlike the full tent map, which has shadowing and for which every closed, internally chain transitive set is an $\omega$-limit set \cite{Barwell3}). So whilst Theorem \ref{thm:countereg} provides a counter example to Conjecture \ref{conj:ICTomega_naive}, it enhances the case for Conjectures \ref{conj:ICTomega_tent} and \ref{conj:ICTomega_general}.
\end{rem}

%--------------------------------------------------------------------------------------------

\section{Tent Maps with Periodic Critical Point}

We begin this section with some observations about tent maps with periodic critical point $c$. First, recall that for tent maps with slope $\lambda<1$, every point in the interval converges to $0$, and for $\lambda>2$ the critical point is mapped out of the interval $I=[0,1]$. For $\lambda=1$ every point in $[0,c]$ is fixed, and for $\lambda=2$ the critical point is eventually mapped onto $0$ which is fixed. So when considering tent maps with periodic critical point we are naturally restricting our attention to slope values $\lambda\in(1,2)$. Hence from now on we will assume this restriction on the value of $\lambda$ implicitly when referring to tent maps with periodic critical point.

For a tent map $T_{\lambda}$ with slope $\lambda$ and critical point $c$ with period $m\geq3$, let
\[
    \delta_T := \lambda^{-m}\min{\{|x - y|\ :\ x \neq y,\ x,y\in \Orb(c)\}},
\]
and let
\[
    P := \{p \in [0, 1]\ :\ T^i(p) = c \mbox{ for some $i \in \nat$}\}.
\]

For each $p \in P$ let $n_p \in \nat$ be least such that $T^{n_p}(p) = c$, and for $n\in\nat$ let
\[
    P_{n} := \{p \in P\ :\ n_p < \max\{n,2m\}\},
\]
so that $Orb(c)\subset P_n$ for every $n\in\nat$.

\begin{lem}\label{lem:evenkneadingseq}
Suppose that the tent map $T$ has periodic critical point $c$ of period $m\geq3$ and kneading sequence $K=(D^{\smallfrown}\diamond)^{\infty}$, for $\diamond\in\{0,1\}$ and $D\in\{0,1\}^{m-1}$. Then $D^{\smallfrown}\diamond$ is even; in other words, $\diamond=1$ if $D$ is odd and $\diamond=0$ if $D$ is even.
\end{lem}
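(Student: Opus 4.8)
The plan is to pin down $D$ and $\diamond$ explicitly from the orbit of $c$ and then read off the parity by a short orientation count. Write $\itin(c)=(C^{\smallfrown}\vct{t})^{\infty}$ with $\vct{t}=a_1a_2\cdots a_{m-1}\in\{0,1\}^{m-1}$, where $a_i=\add(T^i(c))$; these are genuine $0/1$ symbols because $T^i(c)\neq c$ for $1\le i\le m-1$ (the critical point has period $m$).

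The first step is to show that $D=\vct{t}$ and that $\diamond$ is exactly $\lim_{y\uparrow T(c)}\add(T^{m-1}(y))$. By definition $K=\itin^-(T(c))=\lim_{y\uparrow T(c)}\itin(y)$, and $\itin(T(c))=\sigma(\itin(c))=(\vct{t}^{\smallfrown}C)^{\infty}$. For $0\le i\le m-2$ we have $T^i(T(c))=T^{i+1}(c)\neq c$, so $\add$ is continuous along these iterates and the $i$-th symbol of $\itin^-(T(c))$ equals $\add(T^{i+1}(c))=a_{i+1}$; thus the first $m-1$ symbols of $K$ are exactly $\vct{t}$. The next symbol sits where $\itin(T(c))$ has its first $C$ (since $T^{m-1}(T(c))=c$), so $\add(T^{m-1}(y))$ is eventually constant, equal to $0$ or $1$, as $y\uparrow T(c)$, and this constant is the $m$-th symbol of $K$. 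Matching against the hypothesis $K=(D^{\smallfrown}\diamond)^{\infty}$ with $|D|=m-1$ yields $D=\vct{t}$ and $\diamond=\lim_{y\uparrow T(c)}\add(T^{m-1}(y))$.

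The second step computes $\diamond$ by tracking orientation. On a left-neighbourhood of $T(c)$ the map $T^{m-1}$ is a homeomorphism onto a neighbourhood of $T^{m-1}(T(c))=c$, being the composition of $m-1$ monotone pieces of $T$, the $j$-th of which is $T$ near $T^{j}(c)$ and is orientation-reversing precisely when $T^{j}(c)>c$, that is, when $a_j=1$. Hence $T^{m-1}$ is orientation-preserving near $T(c)$ if and only if $\vct{t}$ contains an even number of $1$'s, i.e.\ if and only if $D=\vct{t}$ is even. Since $T^{m-1}(T(c))=c$ and $y\uparrow T(c)$, it follows that $T^{m-1}(y)$ lies just below $c$ (address $0$) when $D$ is even and just above $c$ (address $1$) when $D$ is odd; equivalently $\diamond=0$ when $D$ is even and $\diamond=1$ when $D$ is odd. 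In both cases $D^{\smallfrown}\diamond$ contains an even number of $1$'s, which is the assertion.

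I expect the only delicate point --- the main obstacle --- to be the bookkeeping in the orientation count: getting the list of pieces composing $T^{m-1}$ near $T(c)$ exactly right (it is ``$T$ near $T^1(c)$, then $T$ near $T^2(c)$, \ldots, then $T$ near $T^{m-1}(c)$'', with no off-by-one slip), and checking that this composition really is a local homeomorphism there --- which is exactly where periodicity of $c$, ensuring $T^1(c),\dots,T^{m-1}(c)$ all avoid $c$, gets used. Once that is in place the parity conclusion is immediate. (An equivalent route avoids mentioning $\itin^-(T(c))$ altogether: compute the one-sided derivative of $T^m$ at $c^{+}$, which equals $(-1)^{1+\ell}\lambda^{m}$ where $\ell$ is the number of $1$'s in $\vct{t}$, and read $\diamond$ off its sign; this is the same count in different clothing.)
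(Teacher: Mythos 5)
Your proof is correct, and it takes a genuinely different (direct) route from the paper's (contradiction) argument, though both ultimately rest on the same parity--orientation correspondence. The paper assumes $D^{\smallfrown}\diamond$ is odd, observes that then $T^m$ reverses orientation on the left $\delta_T$-neighbourhood of $T(c)$ and expands by $\lambda^m$, so a point $x$ just below $T(c)$ satisfies $T^m(x)>T^m(T(c))=T(c)$, contradicting the fact that no point of $I$ is mapped above $T(c)$; parity of the full length-$m$ word is thus ruled out as odd without ever computing $\diamond$. You instead work with $T^{m-1}$ and the word $D$ alone: you first identify $D$ with the word $\vct{t}$ recording the addresses of $T(c),\dots,T^{m-1}(c)$ and show $\diamond=\lim_{y\uparrow T(c)}\add(T^{m-1}(y))$, then read $\diamond$ off the orientation of $T^{m-1}$ on a left-neighbourhood of $T(c)$, which is determined by the parity of $D$. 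What your version buys is that it makes explicit a step the paper uses only implicitly --- namely that the symbols of $K=\itin^-(T(c))$ really do record the one-sided addresses of the iterates of a small left-neighbourhood of $T(c)$, including the limiting side of $c$ for the last symbol --- and it determines $\diamond$ constructively rather than excluding one parity; it also needs neither the quantity $\delta_T$ nor the maximality of $T(c)$. What the paper's version buys is brevity: one contradiction inequality replaces your two-step bookkeeping. Your identification of the ``delicate point'' (the off-by-one bookkeeping of which branches compose $T^{m-1}$ near $T(c)$, legitimized by $T^j(c)\neq c$ for $1\le j\le m-1$) is accurate, and your parenthetical one-sided-derivative computation at $c^{+}$ is indeed the same count, matching the paper's definition $K=\sigma(\itin^+(c))$.
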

\begin{proof}
Suppose that $D^{\smallfrown}\diamond$ is odd. Let $x\in \bigl(T(c)-\delta_T, T(c)\bigr)$ and let $d_x:=|x-T(c)|$. By the definition of $\delta_T$, $c\notin T^i\bigl[x,T(c)\bigr)$ for every $i\leq m$. Then since $D^{\smallfrown}\diamond$ is odd and the slope of the map is $\lambda>1$,
\begin{align*}
T^m(x) &=x+\lambda^m d_x\\
 &>T(c)
\end{align*}
which is not possible. Thus $D^{\smallfrown}\diamond$ must be even.
\end{proof}

As a consequence of Lemma \ref{lem:evenkneadingseq}, for tent maps $T$ with periodic critical point $c$ of period $m\geq3$, $T^m[c-\delta_T,c+\delta_T]\subset(0,c]$ if $\restr{K}{m-1}$ is even, and $T^m[c-\delta_T,c+\delta_T]\subset[c,1)$ if $\restr{K}{m-1}$ is odd. We exploit this property in the following definition.

\begin{defn}
For a tent map $T$ with periodic critical point $c$ of period $m\geq3$, let $S=\bigl\{[c - \delta_T, c), (c, c + \delta_T]\bigr\}$ and define the \textit{accessible side of $c$}, $A\in S$, to be
\[
A:=
\begin{cases}
    [c - \delta_T, c) &\mbox{ if }T^m[c - \delta_T, c + \delta_T] \subset [c,1)\\
    (c, c + \delta_T] &\mbox{ if }T^m[c - \delta_T, c + \delta_T] \subset (0,c]
\end{cases}
\]
Then the \textit{hidden side of $c$}, $H$, is defined to be the element of $S$ which is not $A$.
\end{defn}

In this section we prove that a closed, internally chain transitive set of a tent-map whose critical point is periodic is necessarily an $\omega$-limit set.  %I've been deliberately vague about the basic mechanics of the symbolics because I'm fairly sure they work and I would not like to write lemmas establishing basic facts when they probably exist already in other papers.  Please forgive the messy notation.
This result follows on from a series of results including the following from \cite{Barwell2}, which relates $\omega$-limit sets of maps with no homterval to internally chain transitive sets which do not contain the image of any critical point:

\begin{thm}\label{thm:tentnocrit}
Suppose that $f:I\rightarrow I$ is an interval map having critical points $c_1,\ldots,c_k$ and no homterval, and $D\subset I$ is closed and does not contain $f(c_i)$ for any $1\leq i\leq k$. Then $D$ is internally chain transitive if and only if $D=\omega(x,f)$ for some $x\in I$.
\end{thm}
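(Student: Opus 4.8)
The plan is to prove the two implications separately. That $D=\omega(x,f)$ implies $D$ is internally chain transitive holds for every continuous self-map of a compact metric space, by Hirsch et al.\ \cite{Hirsch}, so the whole content is the converse; assume henceforth that $D\subset I$ is closed and internally chain transitive with $f(c_i)\notin D$ for every $i$. The first step is to upgrade the hypothesis. Internal chain transitivity forces $f(D)=D$: given $y\in D$, a non-trivial pseudo-orbit in $D$ starting at $y$ places $f(y)$ within $\delta$ of a point of $D$ for every $\delta>0$, so $f(y)\in\overline{D}=D$, and running a loop in $D$ back to $y$ and using compactness of $D$ with continuity of $f$ gives $y\in f(D)$. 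Since $f$ has only finitely many critical points, $f(D)=D$ together with $f(c_i)\notin D$ forces $c_i\notin D$ for every $i$ (otherwise $f(c_i)\in f(D)=D$). Hence $D$ is separated from the critical set: there is $\eta>0$ with the $\eta$-neighbourhood $N_\eta(D)$ containing no $c_i$, so $f$ is monotone, hence injective, on every subinterval of $I$ of diameter less than $\eta$ that meets $D$.

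The second step is to manufacture a single asymptotic pseudo-orbit carried by $D$. Fix a countable dense set $\{d_1,d_2,\dots\}\subset D$; for each $k$, internal chain transitivity lets us splice together $2^{-k}$-pseudo-orbits inside $D$ running $d_1\to d_2\to\cdots\to d_k\to d_1$, and concatenating these blocks over $k=1,2,\dots$ produces a sequence $\langle z_0,z_1,\dots\rangle\subset D$ with $d(f(z_i),z_{i+1})\to0$ in which each $d_j$ recurs infinitely often, so that $\overline{\{z_i:i\ge n\}}=D$ for every $n$. This is the asymptotic-pseudo-orbit construction underlying \cite{Hirsch}.

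The third and decisive step is to shadow $\langle z_i\rangle$ by a genuine orbit whose $\omega$-limit set is still $D$. I would build nested non-degenerate closed intervals $I\supseteq J_0\supseteq J_1\supseteq\cdots$ and indices $\ell_0<\ell_1<\cdots$ such that $f^{\ell_m}$ is a homeomorphism on $J_m$, the image $f^{\ell_m}(J_m)$ is a neighbourhood of $z_{\ell_m}$, and $f^i(J_m)\subset B(z_i,2^{-m})$ for all $0\le i\le\ell_m$. To pass from $(J_m,\ell_m)$ to $(J_{m+1},\ell_{m+1})$, start inside the neighbourhood $f^{\ell_m}(J_m)$ of $z_{\ell_m}$ and follow $\langle z_i\rangle$ forward, at each stage passing to a subinterval small enough to keep its images inside the tighter balls $B(z_i,2^{-(m+1)})$ — using that consecutive jumps $d(f(z_i),z_{i+1})$ in this stretch are as small as we wish, together with the local injectivity of $f$ near $D$ from the first step — until the no-homterval hypothesis (a non-degenerate interval cannot have all its forward images lying in a single lap) forces the subinterval currently carried to regain a full neighbourhood of the corresponding $z_j$; take that index, neighbourhood and subinterval to be $\ell_{m+1}$, $f^{\ell_{m+1}}(J_{m+1})$ and $J_{m+1}$. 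Any $x\in\bigcap_m J_m$ then satisfies $d(f^i(x),z_i)\to0$, so $f^i(x)\to D$ gives $\omega(x)\subseteq\overline{D}=D$, while $f^i(x)$ comes within every prescribed distance of each $d_j$ infinitely often, so $D=\overline{\{d_j\}}\subseteq\omega(x)$; hence $\omega(x)=D$.

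The step I expect to be the main obstacle is exactly this third one. For a tent map one could instead write down an admissible symbolic sequence $\kappa$, exploiting that $D$ avoids the critical point so that addresses are locally constant along $D$, in which arbitrarily long initial segments of the (upper or lower) limit itineraries of points of $D$ — and of no other point — recur infinitely often, and then read off $\omega(x)=D$ for $x$ with \emph{$\itin(x)=\kappa$} from Theorem~\ref{thm:symomega}; but for a general interval map there is no ambient symbolic model and the argument must be run geometrically as above. The delicate point is to balance the expansion supplied by the no-homterval hypothesis (which keeps the carried subintervals from collapsing symbolically and so lets them periodically regrow to genuine neighbourhoods) against the requirement of tracking $\langle z_i\rangle$ ever more closely, all the while maintaining $J_{m+1}\subseteq J_m$ and $\ell_m\to\infty$; the separation of $D$ from both the critical points and the critical values — the latter being the stated hypothesis, promoted to the former in the first step — is precisely what makes $f$ behave like a local homeomorphism throughout the construction.
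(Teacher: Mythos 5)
Your first two steps are fine (invariance of $D$ from internal chain transitivity, the deduction $c_i\notin D$, the separation $\eta>0$, and the asymptotic pseudo-orbit through a dense subset of $D$), and note for context that the present paper does not prove this theorem at all: it is quoted from \cite{Barwell2}, where the argument is symbolic, running through itineraries, admissibility relative to the kneading sequences, and Theorem \ref{thm:symomega}. The genuine gap in your proposal is the third step. The only expansion the no-homterval hypothesis supplies is that a non-degenerate interval must \emph{eventually} have an iterate containing a critical point; but your construction demands $f^i(J_{m+1})\subset B(z_i,2^{-(m+1)})$ with $z_i\in D$, and once $2^{-(m+1)}<\eta$ such an iterate can never contain a critical point, so inside the tracking window the hypothesis forces no growth whatsoever. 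There is also no contradiction available if the carried interval simply keeps shrinking: you repeatedly pass to subintervals to absorb the pseudo-orbit jumps and the modulus of continuity, so the limiting object can degenerate to a single point, which is not a homterval. Worse, because the pseudo-orbit jumps, already after one step the images of the carried interval need not contain the points $z_i$ at all, and nothing you say forces $f^{\ell_{m+1}}(J_{m+1})$ ever to become a neighbourhood of some later $z_j$; for a general piecewise monotone map there is no local expansion near $D$ to correct the accumulating jump errors (the map may even be contracting on the relevant region), so the statement you are implicitly asserting --- that an asymptotic pseudo-orbit lying in a closed invariant set disjoint from the critical points is asymptotically shadowed --- is a shadowing-type property that maps in this class do not possess in general, and it does not follow from ``no homtervals''.

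A symptom of the problem is how the hypotheses are used: your argument only ever exploits $c_i\notin D$, so if it worked it would prove a stronger theorem in which the assumption $f(c_i)\notin D$ is replaced by $c_i\notin D$, and the stated hypothesis would be idle. In the actual proof the avoidance of the critical \emph{values} is essential: the kneading sequences are the (limit) itineraries of the points $f(c_i)$, and $D$ being closed, invariant and disjoint from $\{f(c_1),\ldots,f(c_k)\}$ keeps the itineraries of points of $D$, together with all their shifts, uniformly (within a fixed number of coordinates) below the kneading sequences; this is what makes the sequence obtained by concatenating long itinerary segments of points of $D$ along fine pseudo-orbits admissible, hence equal to $\itin(x)$ for some $x$, after which Theorem \ref{thm:symomega} yields $\omega(x)=D$. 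Finally, your remark that ``for a general interval map there is no ambient symbolic model'' is not correct: itineraries and kneading theory apply to any map with finitely many critical points, and Theorem \ref{thm:symomega} is stated in the paper for arbitrary no-homterval interval maps, so the symbolic route you set aside is precisely the one that works in this generality.
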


Since tent maps with slope $\lambda\in(1,2)$ have no homterval, such maps satisfy the hypothesis of Theorem \ref{thm:tentnocrit}.

We add to the above description of symbolic dynamics for tent maps the definition of a \textit{signature sequence} \cite{MilnorThurston}, which indicates whether the slope of the map is positive or negative at each iterate of the critical point $c$.

For a tent map $T$ with critical point $c$ and kneading sequence $K=K_0 K_1 K_2 \ldots$, we define the signature sequence $\rho=\rho_0 \rho_1 \rho_2 \ldots$ as follows:
\[\rho_0 = -1\]
\[\begin{array}{lll}
 \rho_{n+1} & = \rho_n &\mbox{ if }K_n = 0\\
 \rho_{n+1} & = -\rho_n &\mbox{ if }K_n = 1\\
 \rho_{n+1} & = -1 &\mbox{ if }K_n = C
\end{array}\]

The following theorem, proved in \cite{Coven}, has been referred to already in this paper, and we state it now explicitly as it will be required in what follows:

\begin{thm}\label{thm:tentshad}
Suppose that $T:I\rightarrow I$ is a tent map with slope $\lambda\in(1,2)$ and kneading sequence $K=(K_0 K_1 K_2 \ldots)\in\{0,1,C\}^{\nat}$. Then $T$ has the shadowing property if and only if for every $\epsilon>0$ there is an $n\in\nat$ such that $|T^n(c)-c|<\epsilon$ and either $T^n(c)=c$, or $\rho_n = +1$ if $K_n = 0$ and $\rho_n = -1$ if $K_n = 1$.
\end{thm}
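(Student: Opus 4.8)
We sketch a proof; the result is that of \cite{Coven}, and we indicate the shape of the argument together with where its difficulty lies. The plan is to localize everything to the core interval $J:=[T^2(c),T(c)]$: every point of $I$ falls into $J$ within two iterates and $T$ carries each one-sided neighborhood of an endpoint of $J$ into $J$, so it suffices to shadow $\delta$-pseudo-orbits lying in $J$ by true orbits in $J$. On $J$ the map is piecewise linear with $|T'|=\lambda>1$, so off $c$ it is locally an expanding homeomorphism onto its image, and the only mechanism by which shadowing can fail is the fold of $T$ over $c$. Thus the whole problem is the behavior of orbits and pseudo-orbits that pass close to $c$, and the purpose of the signature sequence $\rho$ is precisely to record, for each $n$ at which no precritical point interferes, that the $T^n$-image of a one-sided $\eta$-neighborhood of $c$ is an interval of length comparable to $\lambda^n\eta$ lying on a definite side of $T^n(c)$. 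For an $n$ with $|T^n(c)-c|$ small, the hypothesis ``$\rho_n=+1$ if $K_n=0$ and $\rho_n=-1$ if $K_n=1$'' says that this return of the critical orbit to $c$ is \emph{good}, meaning that this image straddles $c$ on the side from which a true orbit tracking the critical orbit can still continue to track; note that when $c$ is periodic of period $m$ the hypothesis holds trivially with $n=m$ (so that $T^n(c)=c$), which is how one obtains shadowing for the tent maps with periodic critical point studied here.

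For sufficiency, assume the return condition and fix $\epsilon>0$. Choose a good return $n$ with $|T^n(c)-c|$ much smaller than $\epsilon$, then $\delta$ small relative to $n$ and that return. Given a $\delta$-pseudo-orbit in $J$, run the standard nested-interval argument: build a decreasing sequence $U_0\supseteq U_1\supseteq\cdots$ of subintervals of $J$, where $U_i$ is essentially the largest interval on which $T^j$ stays within $\epsilon$ of $x_{i+j}$ until the first future time at which $x_{i+j}$ comes within a threshold $\delta'$ of $c$, with the construction continued recursively from there. Expansion and smallness of $\delta$ keep the $U_i$ nonempty and make $T(U_i)$ cover $U_{i+1}$ at every step away from $c$; the only step that needs an argument is a $\delta'$-approach to $c$, where a true orbit must commit to one branch of $T^{-1}$. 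There one uses that within $n$ further iterates the critical orbit returns to within $|T^n(c)-c|$ of $c$ and that this return is good, so the $T^n$-image of the relevant one-sided neighborhood of $c$ reaches across $c$ on the side on which $U_{i+n}$ lies; hence the recursion does not stall and the cumulative tracking error stays below $\epsilon$. A point of $\bigcap_i T^{-i}(U_i)$ then shadows the pseudo-orbit, the admissibility conditions of Definition \ref{def:admiscond} ensuring that the symbolic choices made along the way correspond to genuine points.

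For necessity one argues the contrapositive: if the condition fails, there is $\epsilon_0>0$ such that every return of $\Orb(c)$ to within $\epsilon_0$ of $c$ is distinct from $c$ and \emph{bad}, that is, always on the side from which tracking cannot be resumed. Then for each $\delta>0$ one builds a $\delta$-pseudo-orbit with no $(\epsilon_0/2)$-shadowing point by following the true orbit of $c$ until $T^n(c)$ lands within $\delta$ of $c$, inserting a legal $\delta$-jump back to $c$, and repeating. A true orbit shadowing this must both track $\Orb(c)$ and cross $c$ at each insertion, but since every such return is bad, just after a crossing $T$ carries the tracking neighborhood to the wrong side of the critical orbit, forcing the true orbit more than $\epsilon_0/2$ away from the pseudo-orbit within a bounded number of steps -- a contradiction. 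I expect the main obstacle to be the quantitative fold estimate underpinning both directions: proving that the $T^n$-image of a one-sided $\eta$-neighborhood of $c$ is an interval of length comparable to $\lambda^n\eta$ lying on the side of $T^n(c)$ prescribed by $\rho_n$ (while keeping track of intervening precritical points), and then using it to show the $U_i$ survive in the sufficiency direction and that the competing true orbit is expelled in the necessity direction. The rest is the standard expanding-interval-map shadowing machinery together with the admissibility conditions.
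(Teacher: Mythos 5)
First, a point of reference: the paper does not prove this statement at all --- it is imported verbatim as \cite[Theorem 4.2]{Coven}, so there is no internal argument to compare yours against. Judged on its own terms, your sketch captures the right heuristic (all failure of shadowing is concentrated at the fold over $c$; the signature condition says close returns of $\Orb(c)$ to $c$ happen with the orientation that lets a true orbit keep tracking; the periodic case $T^m(c)=c$ gives shadowing trivially, which is how the paper uses the theorem), and the nested-interval scheme for sufficiency is the standard machinery. But as written it is a plan, not a proof: the ``quantitative fold estimate'' you defer is the entire content of the sufficiency direction, and your recursion is not specified at the only delicate point --- what happens when a second $\delta'$-approach to $c$ occurs before the $n$-step good return you are relying on has played out (nested close approaches), and how the cumulative error $\sum\lambda^{-k}$-type bookkeeping interacts with the choice of the good return $n$ relative to $\epsilon$. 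Asserting that ``the recursion does not stall'' is precisely what needs proving.

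The necessity direction has a concrete gap beyond incompleteness. The negation of the stated condition splits into two cases: (i) $\Orb(c)$ returns $\epsilon_0$-close to $c$ but only with the wrong orientation (your ``bad returns''), and (ii) $\Orb(c)$ never enters $B_{\epsilon_0}(c)\setminus\{c\}$ at all, i.e.\ $c$ is non-recurrent. Your construction --- ``follow the true orbit of $c$ until $T^n(c)$ lands within $\delta$ of $c$, insert a legal $\delta$-jump back to $c$, and repeat'' --- cannot even begin in case (ii) once $\delta<\operatorname{dist}(\Orb(T(c)),c)$, because no such $n$ exists. Yet case (ii) is exactly the case this paper invokes (the remark after Theorem \ref{thm:countereg} deduces failure of shadowing for the counterexample maps precisely from non-recurrence of $c$), so a proof of the theorem must produce non-shadowable pseudo-orbits there too; this requires a different construction, typically a pseudo-orbit that passes through a neighbourhood of $c$ and then continues on the side of $T(c)$ (or of a later image) that true orbits folding through $c$ cannot reach, with an expansion argument showing no true orbit can track both the approach and the continuation. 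Also note that the naive version of that idea fails (a pseudo-orbit that jumps to $T(c)$ after nearing $c$ is shadowed by a genuine precritical orbit), so the case analysis is genuinely delicate; simply asserting that a bad return ``expels'' the competing true orbit within a bounded number of steps does not settle either case.
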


The following property was introduced by {\v{S}}arkovs$'$ki{\u\i}, who proved that it is an inherent property of $\omega$-limit sets (this fact was also observed in \cite{BlockCoppel}). The term \textit{weak incompressibility} appears in \cite{Balibrea} and we adopt this term here.

\begin{defn}\label{WI}
For a continuous map $f$ on a compact metric space $X$, a closed set $L\subset X$ is said to have \textit{weak incompressibility} (or is said to be \textit{weakly incompressible}) if for any non-empty subset $U\subsetneq L$ which is open in $L$, $\cl{f(U)}\nsubseteq U$.
\end{defn}

Theorem \ref{thm:omega_WI} is due to {\v{S}}arkovs$'$ki{\u\i}, and is from \cite{Sarkovskii}.

\begin{thm}\label{thm:omega_WI}
Suppose that $f:X\rightarrow X$ is a continuous map on the compact metric space $X$. Then for every $x\in X$, $\omega(x,f)$ has weak incompressibility.
\end{thm}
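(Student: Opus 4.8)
The statement to prove is Theorem~\ref{thm:omega_WI}: for a continuous map $f:X\rightarrow X$ on a compact metric space, $\omega(x,f)$ is weakly incompressible for every $x\in X$. The plan is a proof by contradiction. Suppose that $L=\omega(x,f)$ fails to be weakly incompressible, so there is a non-empty set $U\subsetneq L$ that is open in $L$ with $\cl{f(U)}\subset U$. Write $V=L\setminus U$, which is non-empty (since $U\neq L$) and closed in $L$, hence compact. Since $\cl{f(U)}\subset U$ and $U\cap V=\emptyset$, we have $f(U)\cap V=\emptyset$; equivalently, $f^{-1}(V)\cap L\subset V$. Iterating, $f^n(U)\subset U$ for all $n\geq 1$, so the forward orbit of any point of $U$ under $f$ never returns to $V$.

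The key step is to transfer this ``trapping'' behaviour from $L$ back to the orbit of $x$. Because $U$ and $V=L\setminus U$ are disjoint compact subsets of $X$, there is an $\eta>0$ such that the $\eta$-neighbourhoods $N_\eta(U)$ and $N_\eta(V)$ are disjoint. Using $\cl{f(U)}\subset U$ and compactness, one can choose $\eta$ small enough and then, by uniform continuity of $f$, a $\delta\in(0,\eta)$ such that $f\bigl(N_\delta(U)\bigr)\subset N_{\eta/2}(U)$, so in particular $f\bigl(N_\delta(U)\bigr)$ stays uniformly away from $V$. Now consider the orbit of $x$. Since $L=\omega(x)$ and $V\subset L$ is non-empty, the orbit of $x$ enters $N_\delta(V)$ infinitely often; since $U\subset L$ is non-empty, the orbit of $x$ also enters $N_\delta(U)$ infinitely often. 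Pick a time $k$ with $f^k(x)\in N_\delta(U)$. Then $f^{k+1}(x)\in f\bigl(N_\delta(U)\bigr)$, which is disjoint from $N_\eta(V)\supset N_\delta(V)$; I would like to conclude inductively that $f^{k+j}(x)$ stays in $N_{\eta/2}(U)$ for all $j\geq 1$ — but this requires the orbit, once $\delta$-close to $U$, to remain $\delta$-close to $U$, which is exactly where care is needed.

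The main obstacle, then, is that ``$\cl{f(U)}\subset U$'' only controls $f$ on $U$ itself, not on a full neighbourhood of $U$, and a single orbit of $x$ is not literally contained in $L$ — it only accumulates on $L$. The fix is to use that $\omega(x)$ is invariant and that the orbit tail $\{f^n(x):n\geq N\}$ lies within $N_\delta(L)$ for $N$ large; combined with the fact that on $N_\delta(L)$ the map $f$ sends points near $U$ to points near $f(U)\subset U$ (away from $V$) and sends points near $V$ to points near $f(V)\subset L$, one shows that once the orbit tail is $\delta$-close to $U$ it can never afterwards get $\delta$-close to $V$. This contradicts the fact, noted above, that the orbit must approach the non-empty set $V$ infinitely often. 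Hence no such $U$ exists, and $L$ is weakly incompressible.

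Alternatively — and perhaps more cleanly — one can phrase the argument purely in terms of the compact invariant set $L$ itself, using that $f|_L:L\to L$ is surjective (invariance of $\omega$-limit sets). If $U\subsetneq L$ is open in $L$ with $\cl{f(U)}\subset U$, set $W=\bigcup_{n\geq 0}f^{-n}(U)\cap L$; this is an open, forward-invariant (under $f|_L$), and also backward-invariant proper subset of $L$ whose complement is a non-empty closed invariant set. One then shows directly that no point of $x$'s orbit can accumulate on both this complement and on $U$, again contradicting $\omega(x)=L$. I would present whichever of these two packagings yields the shorter write-up, but in either case the heart of the matter is the neighbourhood bookkeeping in the paragraph above.
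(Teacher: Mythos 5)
The paper itself does not prove Theorem~\ref{thm:omega_WI}; it is quoted from {\v{S}}arkovs$'$ki{\u\i} \cite{Sarkovskii}, so your proposal has to stand on its own. As written it has a genuine gap, and it occurs at the very step you flag as delicate. Your claim that ``$U$ and $V=L\setminus U$ are disjoint compact subsets of $X$'', and hence that some $\eta>0$ makes $N_\eta(U)$ and $N_\eta(V)$ disjoint, is false: $U$ is only assumed open in $L$, so it need not be closed, and in general $\cl{U}\cap V\neq\emptyset$, i.e.\ $d(U,V)=0$. Indeed, whenever $L$ is connected (for instance an $\omega$-limit set that is an interval, exactly the setting of this paper) \emph{every} non-empty proper relatively open $U$ has relative boundary meeting $V$, so no such $\eta$ exists and the neighbourhood bookkeeping that follows collapses. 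Your ``fix'' paragraph and the alternative packaging via $W=\bigcup_{n\geq0}f^{-n}(U)\cap L$ do not repair this: both end by asserting precisely the point at issue (``once the orbit tail is $\delta$-close to $U$ it can never afterwards get $\delta$-close to $V$'', ``no point of $x$'s orbit can accumulate on both''), and the first still tacitly uses the non-existent uniform separation of $U$ from $V$.

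The missing idea is to separate $V$ not from $U$ but from the \emph{compact} set $A:=\cl{f(U)}$, which the hypothesis places inside $U$, so that $d(A,V)>0$ and, crucially, every point of $L$ within $d(A,V)$ of $A$ lies in $U$. With this, two correct routes open up. (i) Trapping: by uniform continuity choose $\delta$ so that any point $\delta$-close to $U$ maps $\eta$-close to $A$ (here $\eta<d(A,V)/2$); since the orbit tail lies arbitrarily close to $L$, a tail point $\eta$-close to $A$ has its nearby $L$-point in $U$ (not $V$), hence is again $\delta$-close to $U$, and induction shows the tail, once $\delta$-close to $U$, stays $\eta$-close to $A$ forever, contradicting that the orbit must approach the non-empty set $V\subset\omega(x)$ infinitely often. (ii) More cleanly: choose $\epsilon$ with $N_{2\epsilon}(A)\cap L\subset U$ and $N_\epsilon(A)\cap N_\epsilon(V)=\emptyset$; the orbit enters $N_\epsilon(A)$ infinitely often (as $\emptyset\neq f(U)\subset A\subset\omega(x)$) and leaves it infinitely often (to approach $V$), so there are infinitely many $n$ with $f^{n}(x)\in N_\epsilon(A)$ but $f^{n+1}(x)\notin N_\epsilon(A)$; a limit point $z$ of these $f^{n}(x)$ lies in $L\cap\cl{N_\epsilon(A)}\subset U$, yet $f(z)=\lim f^{n+1}(x)\notin N_\epsilon(A)$, contradicting $f(z)\in f(U)\subset A$. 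Either completion is short, but without one of them (or an honest proof of the asserted trapping/non-accumulation claims) your argument does not go through.
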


The following two results are from \cite{Barwell3} and will also be used in the proof of the main result in this section.

\begin{prop}\label{prop:ICTinv}
Let $(X,d)$ be a compact metric space, and $f:X\rightarrow X$ be continuous. If $L$ is a closed, internally chain transitive subset of $X$, then $L$ is invariant; in other words $f(L)=L$.
\end{prop}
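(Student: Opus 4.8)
The plan is to establish the two inclusions $f(L)\subseteq L$ and $L\subseteq f(L)$ separately, in each case arguing by contradiction from the definition of internal chain transitivity and using that a point lying outside a closed set is a positive distance from it. Note that since $X$ is compact, $L$ closed, and $f$ continuous, the image $f(L)$ is also compact and hence closed, so this ``positive distance'' reasoning applies to $f(L)$ as well as to $L$. (If $L=\emptyset$ the statement is trivial, so assume $L\neq\emptyset$.)

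First I would prove $f(L)\subseteq L$. Suppose not: there is $x\in L$ with $f(x)\notin L$, and then $\epsilon:=d(f(x),L)>0$. Applying internal chain transitivity to the pair $x,x$ with $\delta:=\epsilon$ produces a $\delta$-pseudo-orbit $\langle x=x_0,x_1,\ldots,x_n=x\rangle$ lying in $L$ with $n\geq 1$; in particular $x_1\in L$ and $d(f(x_0),x_1)<\epsilon$, contradicting $d(f(x),x_1)\geq d(f(x),L)=\epsilon$.

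Next I would prove the reverse inclusion $L\subseteq f(L)$ by the mirror-image argument applied at the terminal end of a pseudo-orbit. If $y\in L\setminus f(L)$, then $\epsilon:=d(y,f(L))>0$ because $f(L)$ is closed; internal chain transitivity applied to $y,y$ with $\delta:=\epsilon$ yields $\langle y=x_0,\ldots,x_n=y\rangle$ in $L$ with $n\geq 1$, so $f(x_{n-1})\in f(L)$ while $d(f(x_{n-1}),y)<\epsilon$, contradicting $d(y,f(L))=\epsilon$. Together with the first inclusion this gives $f(L)=L$.

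The argument is short, and the only genuine subtlety --- hence the ``main obstacle'' --- is the degenerate case in the definition of internal chain transitivity: one must read the defining condition so that the pseudo-orbit from $x$ to $x$ is required to have at least one step ($n\geq 1$), since otherwise the trivial chain $\langle x\rangle$ would render every singleton vacuously internally chain transitive without being invariant. Granting that convention, the proof needs nothing beyond continuity of $f$, compactness of $X$, and closedness of $L$.
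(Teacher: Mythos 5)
Your proof is correct. Note that the paper does not actually prove Proposition \ref{prop:ICTinv} at all --- it is quoted from \cite{Barwell3} and used as a black box --- so there is no in-text argument to compare against; but your argument is the standard one for this fact, differing from the usual published proof only in presentation: you argue both inclusions by contradiction via the positive distance from a point to the closed sets $L$ and $f(L)$, whereas the cited proof runs the same two inclusions directly, sending $\delta\to 0$ and using closedness of $L$ (for $f(L)\subseteq L$) and compactness plus continuity to pass to a limit of the penultimate chain points (for $L\subseteq f(L)$). The two routes are essentially interchangeable; your version is slightly more economical since closedness of $f(L)$ substitutes for the subsequence extraction. You are also right to flag the only real subtlety, namely that the chains in the definition of internal chain transitivity must have at least one step ($n\geq 1$), since otherwise a non-fixed singleton would be vacuously internally chain transitive and the proposition would fail; this is indeed the convention intended in the paper.
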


\begin{thm}\label{thm:WI=ICT}
Let $(X,d)$ be a compact metric space, $f:X\rightarrow X$ be continuous and let $L$ be a closed, nonempty subset of $X$. Then $L$ is internally chain transitive if and only if $L$ is weakly incompressible.
\end{thm}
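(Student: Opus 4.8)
The plan is to prove the two implications separately; only the converse will need invariance of $L$, which I take in the form $f(L)=L$ (automatic for internally chain transitive sets by Proposition~\ref{prop:ICTinv}, and part of the standing framework for weakly incompressible sets in the sense of {\v{S}}arkovs$'$ki{\u\i}).

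For internal chain transitivity $\Rightarrow$ weak incompressibility I would argue contrapositively. Suppose $L$ is not weakly incompressible, so some nonempty relatively open $U\subsetneq L$ satisfies $\overline{f(U)}\subseteq U$. Then $\overline{f(U)}$ is compact and contained in the relatively open set $U$, while $L\setminus U$ is compact and disjoint from $\overline{f(U)}$, so $\eta:=d(\overline{f(U)},L\setminus U)>0$. Fix $x\in U$, $y\in L\setminus U$, and any $\delta<\eta$. Along any $\delta$-pseudo-orbit $\langle x=x_0,\ldots,x_n=y\rangle$ in $L$ there is a least $i$ with $x_i\in U$ and $x_{i+1}\in L\setminus U$; but $f(x_i)\in f(U)\subseteq\overline{f(U)}$ then forces $d(f(x_i),x_{i+1})\geq\eta>\delta$, a contradiction. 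Hence no such pseudo-orbit exists and $L$ is not internally chain transitive.

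For the converse I would fix $x\in L$ and $\delta>0$ and study the forward $\delta$-reachable set
\[
U:=\{y\in L : \text{there is a }\delta\text{-pseudo-orbit }\langle x=x_0,\ldots,x_n=y\rangle\subset L,\ n\geq1\}.
\]
I would check three things. First, $U$ is relatively open: if $y$ is reached by a chain ending $\ldots,x_{n-1},y$, then every $z\in L$ with $d(f(x_{n-1}),z)<\delta$ also lies in $U$, giving a relative neighbourhood of $y$ inside $U$. Second, $U\neq\emptyset$: since $f(x)\in L$, the chain $\langle x,f(x)\rangle$ puts $f(x)\in U$. Third, and crucially, $\overline{f(U)}\subseteq U$: any $z\in\overline{f(U)}$ lies in $\overline{f(L)}=f(L)\subseteq L$, and some $y\in U$ has $d(f(y),z)<\delta$, so appending $z$ to a chain from $x$ to $y$ exhibits $z\in U$. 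Weak incompressibility now yields the dichotomy $U=\emptyset$ or $U=L$, and as $U\neq\emptyset$ we get $U=L$. Thus $x$ reaches every $y\in L$ by a $\delta$-pseudo-orbit in $L$; since $x$ and $\delta$ were arbitrary (the case $x=y$ being covered by the trivial length-$0$ chain), $L$ is internally chain transitive.

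The main obstacle is the converse direction, specifically the step $\overline{f(U)}\subseteq U$: this is exactly where invariance $f(L)\subseteq L$ together with continuity and compactness of $f$ on $L$ are used, first to keep the closure inside $L$ and then to re-enter $U$ by a single short jump, so that weak incompressibility applies cleanly to give the dichotomy. Securing invariance is therefore the essential preliminary, since without $f(L)\subseteq L$ the reachable set $U$ may be empty and the dichotomy becomes vacuous.
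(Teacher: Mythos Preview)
The paper does not supply its own proof of this theorem; it is quoted from \cite{Barwell3}, so there is no in-paper argument to compare against. Your proof is correct and follows the standard route: the contrapositive for the forward direction, and for the converse the forward $\delta$-reachable set $U$ together with the dichotomy $U=\emptyset$ or $U=L$ forced by weak incompressibility.

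Your flagging of invariance is the right emphasis. Note that the paper itself introduces weak incompressibility as ``a property of invariant sets'' (just before Definition~\ref{WI}), so the standing hypothesis $f(L)\subseteq L$ is indeed part of the framework even though it is not repeated in Definition~\ref{WI}. Without it the implication genuinely fails: for example, with $X=[0,1]$, $f\equiv 1/2$, the set $L=\{0,1\}$ is vacuously weakly incompressible (each singleton $U$ has $\overline{f(U)}=\{1/2\}\not\subseteq U$) yet admits no $\delta$-pseudo-orbit in $L$ for $\delta<1/2$. One small remark: in your final sentence you need not invoke the length-$0$ chain for $x=y$, since once $U=L$ you already have $x\in U$, i.e.\ $x$ is reachable from itself by a chain of length $\geq 1$.
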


If $D\subset I$ is closed and internally chain transitive, then by Proposition \ref{prop:ICTinv} we know that $T(D)=D$, but this does not imply that $T^{-1}(D)=D$. The following proposition shows that we can expand such a set $D$ in a certain way to obtain another closed and internally chain transitive set which contains more elements of $T^{-1}(D)$. As usual, for $x\in I$ and $\epsilon>0$ we define $B_{\epsilon}(x):=\{y\in I\ :\ |y-x|<\epsilon\}$.

\begin{prop}\label{prop:ICTextn}
Suppose that $T:I\rightarrow I$ is a tent map with slope $\lambda$ having critical point $c$ with period $m\geq3$ and accessible side $A$. Suppose also that $D\subset I$ is a closed and internally chain transitive set which contains $c$ and for which $c$ is not isolated in $D$.

For each $n \in \nat$ and each $p\in P$, there is a set $D_{n, p} \subset \cl{B_{2^{-n}\lambda^{-n_p}\delta_T}(p)}$ defined as
\[
    D_{n, p} := T^{-n_p}\Bigl(\cl{B_{2^{-n}\delta_T}(c)} \cap A \cap D\Bigr) \cap \cl{B_{2^{-n}\lambda^{-n_p}\delta_T}(p)}.
\]
Moreover, for each $n\in\nat$ and $p\in P$ the following are true:
\begin{enumerate}
    \item $T^{n_p}(D_{n,p}) = \cl{B_{2^{-n}\delta_T}(c)} \cap A \cap D$;
    \item $D_{n,p}\cup\{p\} = T^{-n_p}\Bigl(\cl{B_{2^{-n}\delta_T}(c)} \cap \cl{A} \cap D\Bigr) \cap \cl{B_{2^{-n}\lambda^{-n_p}\delta_T}(p)}$, so this set is compact;
    \item $D_n := D \cup \Bigl(\bigcup_{p \in P \cap D}{D_{n, p}}\Bigr)$ is closed and internally chain transitive.
\end{enumerate}
\end{prop}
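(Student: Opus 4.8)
Write $r_p:=2^{-n}\lambda^{-n_p}\delta_T$ and $E_n:=\cl{B_{2^{-n}\delta_T}(c)}\cap A\cap D$, so that $D_{n,p}=T^{-n_p}(E_n)\cap\cl{B_{r_p}(p)}$. The engine of the whole argument is an affineness statement that I would isolate as a preliminary lemma: \emph{for every $p\in P$ and every $\eps\le\delta_T$ the restriction of $T^{n_p}$ to $\cl{B_{\lambda^{-n_p}\eps}(p)}$ is an affine homeomorphism of slope $\pm\lambda^{n_p}$ onto $\cl{B_{\eps}(c)}$}; taking $\eps=2^{-n}\delta_T$ gives that $T^{n_p}$ maps $\cl{B_{r_p}(p)}$ affinely and bijectively onto $\cl{B_{2^{-n}\delta_T}(c)}$, sending $p$ to $c$. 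To prove the lemma I would induct on $n_p$, peeling off one iterate at a time: if $q:=T(p)$ then $n_q=n_p-1$, and since $T$ has slope $\pm\lambda$ away from $c$ the inductive step reduces to the single inequality $|p-c|\ge\lambda^{-n_p}\eps$, which ensures $c\notin\operatorname{int}\cl{B_{\lambda^{-n_p}\eps}(p)}$. So everything rests on the separation estimate $|p-c|\ge\lambda^{-n_p}\delta_T$ for every $p\in P\setminus\{c\}$. This I would prove by a further induction on $n_p$: putting $d_i=|T^i(p)-c|$ one gets $d_{i+1}=|\Delta-\lambda d_i|$ with $\Delta=(\lambda-1)/2$; since $\Orb(c)$ lies in the core $[T^2(c),T(c)]$ (of length $\lambda\Delta$) and is $\lambda^m\delta_T$-separated, one obtains $\delta_T<\Delta$ and, more generally, $\lambda^j\delta_T<|T^j(c)-c|$ for $1\le j\le m-1$; it follows that any $z\in\cl{B_{\delta_T}(c)}\setminus\{c\}$ shadows the critical orbit cleanly, $|T^j(z)-T^j(c)|=\lambda^j|z-c|$ for $1\le j\le m$, so its first return time to $c$ exceeds $m$ and $|z-c|=\lambda^{-m}|T^m(z)-c|$; feeding this back into the induction (applied to $T^m(z)$, whose return time is smaller) closes the estimate.

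Given the lemma, (1) and (2) are essentially formal. Since $T^{n_p}$ is a bijection $\cl{B_{r_p}(p)}\to\cl{B_{2^{-n}\delta_T}(c)}\supseteq E_n$, we get $T^{n_p}(D_{n,p})=T^{n_p}(\cl{B_{r_p}(p)})\cap E_n=E_n$, which is (1). For (2), the set $\cl{B_{2^{-n}\delta_T}(c)}\cap\cl{A}\cap D$ differs from $E_n$ only in the point $c$ (here one uses $c\in D$ and $c\notin A$), and the unique $T^{n_p}$-preimage of $c$ in $\cl{B_{r_p}(p)}$ is $p$; hence $T^{-n_p}(\cl{B_{2^{-n}\delta_T}(c)}\cap\cl{A}\cap D)\cap\cl{B_{r_p}(p)}=D_{n,p}\cup\{p\}$, the intersection of a compact ball with the preimage of a compact set under a continuous map, hence compact.

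For (3), closedness of $D_n$ is a short point-set argument: a convergent sequence of points drawn from the $D_{n,p}$ either has its labels $p$ of bounded level (finitely many such $p$, so a subsequence stays in a single $D_{n,p}$, whose closure lies in $D_{n,p}\cup\{p\}\subseteq D_n$ by (2)) or has $n_p\to\infty$ along a subsequence, and then $r_p\to0$ forces the limit to coincide with a limit of the $p$'s, which lies in the closed set $D$. For internal chain transitivity I would use its equivalence with weak incompressibility (Theorem~\ref{thm:WI=ICT}). Suppose $\emptyset\ne U\subsetneq D_n$ is open in $D_n$ with $\cl{T(U)}\subseteq U$. Then $U\cap D$ is open in $D$ and $\cl{T(U\cap D)}\subseteq\cl{T(U)}\cap D\subseteq U\cap D$, so weak incompressibility of $D$ (which holds as $D$ is internally chain transitive) forces $U\cap D\in\{\emptyset,D\}$; the case $U\cap D=\emptyset$ is impossible, for then $U\subseteq\bigcup_pD_{n,p}$ and any $x\in U\cap D_{n,p}$ has, by (1), its $n_p$-th iterate in $E_n\subseteq D$, which $\cl{T(U)}\subseteq U$ drags into $U$. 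Hence $D\subseteq U$. Now for $p\in P\cap D$ with $n_p\ge1$, $T(D)=D$ supplies $p'\in D$ with $T(p')=p$, and (by the lemma, provided $p'\ne c$) $T(D_{n,p'})=D_{n,p}$; iterating, for $p\notin\Orb(c)$ one gets a backward orbit $p=p^{(0)},p^{(1)},\dots$ in $D$ with $n_{p^{(i)}}=n_p+i\to\infty$ and $D_{n,p}=T^i(D_{n,p^{(i)}})$. Choosing $p^{(i_k)}$ convergent ($D$ compact) and using $r_{p^{(i_k)}}\to0$, the $D_{n,p^{(i_k)}}$-preimages of any fixed point of $D_{n,p}$ converge to a point of $D\subseteq U$, whence---$U$ being open and forward-closed---that fixed point lies in $U$. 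Thus $D_{n,p}\subseteq U$ for all $p\notin\Orb(c)$, and the remaining finitely many $D_{n,p}$ with $p\in\Orb(c)$ reduce via $T$ to $D_{n,T(c)}$, which requires a separate treatment tied to the accessible side $A$. This forces $U=D_n$, contradicting $U\subsetneq D_n$, so $D_n$ is internally chain transitive.

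The main obstacle is twofold. First, the separation estimate $|p-c|\ge\lambda^{-n_p}\delta_T$ behind the lemma: the delicate bookkeeping is that an orbit entering $\cl{B_{\delta_T}(c)}$ must shadow the entire critical $m$-cycle before it has any chance of returning to $c$, so its level is constrained and the estimate propagates down the induction. Second, inside (3), the pre-critical points lying on $\Orb(c)$: there the backward-orbit trick can cycle within $\Orb(c)$ instead of escaping to ever-smaller balls, and one must instead exploit the interplay between the accessible side $A$, the manner in which $D$ accumulates at $c$, and forward-invariance of $U$.
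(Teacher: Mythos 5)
Your handling of items (1), (2) and of the closedness half of (3) is essentially correct, and in places more explicit than the paper: the paper simply reads (1) and (2) off the definition, implicitly using that $T^{n_p}$ maps $\cl{B_{\lambda^{-n_p}\delta_T}(p)}$ expansively and injectively onto a ball about $c$, which is exactly what your separation estimate and affineness lemma supply, and your closedness argument (bounded level versus $n_p\to\infty$, using $r_p\to 0$) is the paper's argument. One caveat: your preliminary lemma as stated, ``for every $p\in P$'', fails at $p=c$ itself, where $T^{n_p}$ is injective on no neighbourhood of $p$ and the image of a ball is one-sided; for (1) and (2) at $p=c$ you must either adopt the convention $n_c=0$ (so that $D_{n,c}$ is just $\cl{B_{2^{-n}\delta_T}(c)}\cap A\cap D$) or treat that point separately.

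The genuine gap is in the internal chain transitivity half of (3). Your route via weak incompressibility is a different strategy from the paper, which builds the required $\epsilon$-pseudo-orbits directly: from $x\in D_{n,p}$ one runs the true orbit forward into $D$ and uses internal chain transitivity of $D$, and to return one follows a backward chain $p_0=p$, $T(p_{i+1})=p_i$ in $P\cap D$ until $n_{p_j}>j$ with $\lambda^{-j}<\epsilon$, jumps from $p_j$ to a preimage $z\in D_{n,p_j}$ of $x$, and rides the true orbit back to $x$. Your contradiction argument establishes $D\subseteq U$ and $D_{n,p}\subseteq U$ only for $p\notin\Orb(c)$, and you explicitly defer the finitely many sets $D_{n,p}$ with $p\in\Orb(c)$ to ``a separate treatment tied to the accessible side $A$'' which you never give. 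This is not a routine corner case but the crux of the proposition: every backward chain in $P\cap D$ emanating from $T(c)$ is forced through $c$ (the point $T(c)$ has no other preimage), where $T$ is not locally injective, where $T(D_{n,c})$ need not equal $D_{n,T(c)}$, and where the levels $n_{p^{(i)}}$ need not tend to infinity because the chain can cycle inside $\Orb(c)$; it is precisely here that the accessible/hidden-side dichotomy and Lemma \ref{lem:evenkneadingseq} must be brought to bear, and the same difficulty is what any return-path construction has to negotiate. Without showing $D_{n,p}\subseteq U$ for $p\in\Orb(c)$ you cannot conclude $U=D_n$, so weak incompressibility, and hence internal chain transitivity, of $D_n$ remains unproved; as written the proposal is an incomplete proof of item (3).
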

\begin{proof}
Since $c$ is not isolated in $D$, we have that every point in $Orb(c)$ is not isolated in $D$, including $T^m(c)=c$. In particular, $c$ is not isolated in $D\cap A$, so the set $D_{n,p}$ is well defined.

Property (1) follows from the definition of $D_{n,p}$. To see property (2) notice that $\cl{A}\setminus A = \{c\}$, so by the definition of $D_{n,p}$,
\[
T^{-n_p}\Bigl(\cl{B_{2^{-n}\delta_T}(c)} \cap \cl{A} \cap D\Bigr) \cap \cl{B_{2^{-n}\lambda^{-n_p}\delta_T}(p)}\setminus D_{n,p} = \{p\}.
\]
Thus $D_{n,p} \cup \{p\}$ is as defined, and compactness follows immediately.

To prove property (3), fix any $n\in\nat$ and let $\langle a_i\ |\ i \in \nat\rangle$ be any sequence in $D_n$. Either there exists $K \in \nat$ such that
\[
    \{a_i\ :\ i \in \nat\} \subset D \cup \bigcup_{p \in P_K \cap D}{D_{n, p}}
\]
or there exists a subsequence $\langle b_i\ |\ i \in \nat\rangle$ of $\langle a_i\ |\ i \in \nat\rangle$ and a sequence $\langle p_i\ |\ i \in \nat\rangle$ in $P \cap D$ such that for all $i \in \nat$:
\begin{itemize}
\item $n_{p_{i + 1}} > n_{p_i}$;
\item $\{b_i, p_i\} \subset D_{n, p_i}$.
\end{itemize}
In the former case, $D \cup \left(\bigcup_{p \in P_K \cap D}{D_{n, p}}\right)$ is compact (since $D_{n,p}\cup\{p\}$ is compact for each $p\in D$) and so $\langle a_i\ |\ i \in \nat\rangle$ has a convergent subsequence.  In the latter case, for all $i \in \nat$ $|b_i - p_i| \leq 2\lambda^{-n_{p_i}}\delta$ which converges to $0$ as $i \rightarrow \infty$.  $D$ is closed in $[0, 1]$ and hence is sequentially compact so $\langle p_i\ |\ i \in \nat\rangle$ has a subsequence which converges to a point $x \in D \subset D_n$.  It follows that $\langle a_i\ |\ i \in \nat\rangle$ has a subsequence which converges to $x$.  In either case, $\langle a_i\ |\ i \in \nat\rangle$ has a convergent subsequence, and hence $D_n$ is sequentially compact and is therefore closed.

Let $x \in D_n \setminus D$, $y \in D$, and fix $\epsilon > 0$.  To show that $D_n$ is internally chain transitive it suffices to show that there exist $\epsilon$-pseudo-orbits from $x$ to $y$ and from $y$ to $x$, both completely contained in $D_n$.  Let $p \in P \cap D$ be such that $x \in D_{n, p}$.  Then $T^{n_p}(x) \in D$ and $\langle T^i(x)\ |\ i \leq n_p\rangle$ is an $\epsilon$-pseudo-orbit from $x$ to $T^{n_p}(x)$.  Because $D$ is internally chain transitive we can find an $\epsilon$-pseudo-orbit from $T^{n_p}(x)$ to $y$.  Concatenating the two, we obtain an $\epsilon$-pseudo-orbit from $x$ to $y$ completely contained in $D_n$.  To go from $y$ to $x$ note that by invariance of $D$, for each $p' \in P \cap D$ we can find $p'' \in P \cap D$ such that $T(p'') = p'$.  Thus we can find a sequence $\langle p_i\ |\ i \in \nat\rangle$ in $P \cap D$ such that $p_0 = p$ and for each $i \in \nat$, $T(p_{i + 1}) = p_i$.  Let $j \in \nat$ be such that $\epsilon > \lambda^{-j}$.  It follows from the argument above and the definitions of $D_{n, p}$ and $D_{n, p_j}$ that there exists $z \in D_{n, p_j}$ such that $T^j(z) = x$.  Because $z \in D_{n, p_j}$ and $n_{p_j}>j$, $|z - p_j| < \epsilon$ so there exists an $\epsilon$-pseudo-orbit from $y$ to $z$ and hence there exists an $\epsilon$-pseudo-orbit from $y$ to $x$ completely contained in $D_n$, as required.
\end{proof}

\begin{prop}\label{prop:orbit_seg}
Let $T$ be a tent map having critical point $c$ and let $\{q_n\}_{n\in\nat}\subset\nat$. For each $n\in\nat$ suppose there are points $x_n\in I$, open intervals $B_n\subset I$ and integers $J_n\in\nat$ such that
\begin{enumerate}
	\item $q_n\leq J_n$ for each $n$;
	\item $T^{J_n}(x_n),x_{n+1}\in B_n$;
	\item for every $0\leq j\leq J_n$, $c\notin T^j(B_n)$, so $T^j$ maps $B_n$ homeomorphically onto its image for every $0\leq j\leq J_n$.
\end{enumerate}
Let $\langle a_i : 0\leq i\in\integ\rangle$ be the sequence
\[
\langle x_1,T(x_1),\ldots,T^{J_1-1}(x_1),x_2,T(x_2),\ldots,T^{J_2-1}(x_2),x_3,\ldots\rangle.
\]
Then for every $n\in\nat$, $n>1$, and every $t\geq\sum_{k<n}J_{k}$ we have
\[
\restr{\itin(a_t)}{q_n} = \bigl(\add(a_i)\bigr)_{i=t}^{t+q_n-1}.
\]
\end{prop}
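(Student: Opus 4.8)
The plan is to show that for any index $t$ with $t \geq \sum_{k<n} J_k$, the point $a_t$ lies in the homeomorphic image of some block $B_k$ (with $k \geq n$) under an iterate $T^j$ with enough further iterates available before the critical point can be encountered, so that the first $q_n$ symbols of $\itin(a_t)$ are determined purely by the addresses already recorded in the sequence $\langle a_i \rangle$. First I would fix $n>1$ and $t \geq \sum_{k<n} J_k$, and locate the unique $k \geq n$ and $0 \leq j < J_k$ for which $t = \bigl(\sum_{\ell < k} J_\ell\bigr) + j$, so that $a_t = T^j(x_k)$ and, for $0 \leq i < J_k - j$, $a_{t+i} = T^{j+i}(x_k)$.

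Next I would use hypothesis (3): since $c \notin T^s(B_k)$ for every $0 \leq s \leq J_k$, the point $T^{j+i}(x_k)$ and the interval $T^{j+i}(B_k)$ lie strictly on one side of $c$ for each $0 \leq i \leq J_k - j$, so $\add\bigl(T^{j+i}(x_k)\bigr) = \add(y)$ for all $y \in T^{j+i}(B_k)$ and this address equals the $(j+i)$-th symbol of $\itin(x)$ for any $x \in B_k$. Because $q_n \leq J_n \leq J_k$ (using (1) and the fact that $k \geq n$; more carefully, I only need $q_n \le J_k - j$ when $j$ is small, so the genuinely delicate case is when $j$ is close to $J_k$ and the block $B_k$ is about to ``run out''), I then need to carry the argument across the junction from $x_k$ to $x_{k+1}$. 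Here hypothesis (2) is the key: $T^{J_k}(x_k)$ and $x_{k+1}$ both lie in $B_k$ — wait, rather in the interval $B_k$ for the junction — more precisely $T^{J_k}(x_k), x_{k+1} \in B_k$ in the statement's indexing, so $T^{J_k}(x_k)$ and $x_{k+1}$ share the same address, and inductively the same initial segment of their itineraries for as long as neither iterate meets $c$; repeating this across successive junctions $k, k+1, k+2, \dots$ and invoking $q_{k+1} \leq J_{k+1}$, etc., shows that walking the sequence $\langle a_i \rangle$ forward from index $t$ reproduces, symbol by symbol, an honest initial segment of $\itin(a_t)$ of length at least $q_n$.

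The main obstacle I anticipate is bookkeeping at the block boundaries: one must verify that whenever the running count $j+i$ reaches $J_k$ and we pass from (the tail of) the orbit of $x_k$ to the orbit of $x_{k+1}$, the address sequence does not ``jump,'' i.e. that $\add\bigl(T^{J_k - 1}(x_k)\bigr)$ followed by $\add(x_{k+1}), \add(T(x_{k+1})), \dots$ continues to agree with $\itin(a_t)$. This is exactly where one uses that $T^{J_k}(x_k)$ and $x_{k+1}$ lie in the common interval $B_k$ on which $T^0 = \id$ is a homeomorphism avoiding $c$, together with hypothesis (3) applied to $B_{k+1}$; since the lengths $q_{k+1}, q_{k+2}, \dots$ are each bounded by the corresponding $J$'s, we never need more symbols from any single block than that block safely provides. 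A clean way to package this is to prove by induction on $r \geq 0$ the statement ``the first $r$ addresses $\bigl(\add(a_i)\bigr)_{i=t}^{t+r-1}$ form an initial $r$-segment of $\itin(a_t)$, provided $r \leq q_n$,'' with the inductive step splitting into the case where $a_{t+r}$ is an interior iterate of the current block (immediate from (3)) and the case where it is the start of the next block (handled by (2) plus (3) for the new block). Once this induction is set up, there is no further analytic content, so I would present it compactly rather than spelling out every index.
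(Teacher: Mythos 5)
Your plan has the same architecture as the paper's proof: inside a block the recorded addresses are literally an initial segment of $\itin(a_t)$, so all the content sits at a junction, where the tail of $\itin(a_t)$ (which follows the true orbit of $T^{J_k}(x_k)$) must be compared with the addresses of the orbit of $x_{k+1}$. However, the one step that carries all the weight is justified incorrectly, twice. The principle ``$T^{J_k}(x_k)$ and $x_{k+1}$ share the same address, and inductively the same initial segment of their itineraries for as long as neither iterate meets $c$'' is false: two points on the same side of $c$, neither of whose orbits ever hits $c$, can be sent to opposite sides of $c$ at the very next step, so avoidance of $c$ by the two orbits gives nothing beyond the zeroth symbol. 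And in your ``obstacle'' paragraph you invoke only that $B_k$ itself avoids $c$ (the $j=0$ case of (3)) ``together with hypothesis (3) applied to $B_{k+1}$'' --- but $B_{k+1}$ is the wrong interval: nothing places $T^{J_k}(x_k)$, $x_{k+1}$, or the iterates you need in $B_{k+1}$ (hypothesis (2) only puts $T^{J_{k+1}}(x_{k+1})$ and $x_{k+2}$ there). What is actually needed, and what the paper uses, is hypothesis (3) applied to $B_k$ at \emph{every} step $j\le J_k$: since $T^{J_k}(x_k),x_{k+1}\in B_k$ by (2) and $c\notin T^j(B_k)$ for all $j\le J_k$, the $j$-th images of both points lie in $T^j(B_k)$, an interval on one side of $c$, so $\itin\bigl(T^{J_k}(x_k)\bigr)$ and $\itin(x_{k+1})$ agree in their first $J_k$ symbols. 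With that one-line lemma your induction closes; without it the agreement across the junction is not established beyond one symbol.

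Two smaller points. Your appeal to $J_n\le J_k$ for $k\ge n$ is not among the hypotheses (no monotonicity of the $J_k$ is assumed); the paper's own proof quietly uses $q_n\le J_k$ for all $k\ge n$ in the same way (harmless in the application, where $q_n$ is nondecreasing), so this is a shared imprecision rather than a defect peculiar to your write-up. But your suggestion of ``repeating this across successive junctions'' should be resisted: the agreement supplied by (2) and (3) for $B_k$ does not propagate past a second junction in general, and it is exactly the bound $q_n\le J_k$ for the relevant blocks that guarantees a window of length $q_n$ crosses at most one junction, which is how both your induction and the paper's two-case argument should be closed.
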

\begin{proof}
First notice that since $T^{J_n}(x_n),x_{n+1}\in B_n$, and $c\notin T^j(B_n)$ for every $j\leq J_n$, the itineraries of $T^{J_n}(x_n)$ and $x_{n+1}$ agree on their first $J_n\geq q_n$ places.

For each $n\in\nat$ let $t_n=\sum_{k<n}J_{k}$% and let $\gamma=\bigl(\add(a_i)\bigr)_{i\in\nat}$
. Pick $n\in\nat$ and suppose that $t\geq t_n$. If $t=t_{n'}$ for some $n' \geq n$ then $\itin(a_t) = \itin(x_{n'})$, so since $c\notin T^j(B_{n'})\ni T^j(x_{n'})$ for every $0\leq j\leq J_{n'}$,
\[
\restr{\itin(x_{n'})}{J_{n'}} = \restr{\itin(a_t)}{J_{n'}} = \bigl(\add(a_i)\bigr)_{i = t}^{J_{n'}-1},
\]
and as $J_{n'} \geq q_n$ we are done.

If $t=t_{n'}+r$ for some $n' \geq n$ and $0<r < J_{n'+1}$, then %$\sigma^{t-1}(\gamma)$
$\bigl(\add(a_i)\bigr)_{i\geq t}$ and $\itin(a_t)$ follow the orbit of $T^r(x_{n'})$ until it reaches $x_{n'+1}$, after which %$\sigma^{t-1}(\gamma)$
$\bigl(\add(a_i)\bigr)_{i\geq t}$ follows $x_{n'+1}$ and $\itin(a_t)$ follows $T^{J_{n'}}(x_{n'})$, and both of these agree for at least $J_{n'+1} \geq q_n$ places, so we use reasoning as above to deduce that $\restr{\itin(a_t)}{q_n} = \bigl(\add(a_i)\bigr)_{i=t}^{t+q_n-1}$.
\end{proof}

We can now prove our main result, which shows that for tent maps with periodic critical point (and thus having shadowing), closed internally chain transitive sets are necessarily $\omega$-limit sets.

\begin{thm}\label{thm:_Periodic_tent-map}
Suppose that $T:I\rightarrow I$ is a tent map with periodic critical point. For a closed set $D\subset I$ the following are equivalent:
\begin{enumerate}
	\item $D$ is internally chain transitive;\label{pertent1}
	\item $D$ is weakly incompressible;\label{pertent2}
	\item $D=\omega(y,T)$ for some $y\in I$.\label{pertent3}
\end{enumerate}
\end{thm}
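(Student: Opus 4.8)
The plan is to prove the cycle of implications $(\ref{pertent3})\Rightarrow(\ref{pertent2})\Rightarrow(\ref{pertent1})\Rightarrow(\ref{pertent3})$. The first two implications are immediate from results already in the paper: $(\ref{pertent3})\Rightarrow(\ref{pertent2})$ is Theorem \ref{thm:omega_WI} (\v{S}arkovs$'$ki\u\i), and $(\ref{pertent2})\Rightarrow(\ref{pertent1})$ is Theorem \ref{thm:WI=ICT} (indeed those two are equivalent in any compact metric space). So the entire substance of the theorem is the implication $(\ref{pertent1})\Rightarrow(\ref{pertent3})$: starting from a closed, internally chain transitive set $D$, construct a point $y$ with $\omega(y,T)=D$. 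By Proposition \ref{prop:ICTinv} we know $T(D)=D$, so $D\subseteq[T^2(c),T(c)]$, and we may freely replace $D$ by a preimage and assume $D\subseteq[0,T(c)]$ if convenient.

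First I would dispose of the case $c\notin D$ (equivalently, since $D$ is invariant, $T(c)\notin D$): then Theorem \ref{thm:tentnocrit} applies directly and $D$ is an $\omega$-limit set. Likewise, if $c\in D$ but $c$ is isolated in $D$, invariance forces $D$ to be (contained in the union of) the finite orbit of $c$ together with possibly finitely many preimages, and a periodic orbit is trivially an $\omega$-limit set; this degenerate case should be handled separately and quickly. The hard case is $c\in D$ with $c$ \emph{not} isolated in $D$. Here the idea is to use Proposition \ref{prop:ICTextn} to build, for each $n\in\nat$, an enlarged closed internally chain transitive set $D_n\supseteq D$ that captures preimages of $c$ lying within $2^{-n}\lambda^{-n_p}\delta_T$ of each $p\in P\cap D$; these $D_n$ decrease to $D$ in an appropriate sense (their Hausdorff limit, or $\bigcap_n\overline{\bigcup_{k\ge n}D_k}$, is $D$, because the extra bits shrink to points of $D$). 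The construction is engineered so that the accessible-side condition guarantees the relevant preimages actually exist and so that following an orbit into a small ball around a preimage of $c$, then through $c$, stays controlled.

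The core of the argument is then to manufacture a single point $y$ whose orbit realises $D$ as its $\omega$-limit set, by concatenating finite orbit segments. Enumerate a countable dense subset $\{d_j\}$ of $D$ and a sequence of tolerances $\epsilon_k\downarrow 0$; using internal chain transitivity of the $D_n$ (and the fact that between any two points of $D_n$ there are $\epsilon$-pseudo-orbits in $D_n$), together with shadowing of $T$ — which holds because the critical point is periodic, so Theorem \ref{thm:tentshad} applies — convert the pseudo-orbits into genuine orbit segments that pass $\epsilon_k$-close to every $d_j$ with $j\le k$ and stay within $\epsilon_k$ of $D$, choosing the enlargement level $n=n(k)\to\infty$ so that the excursions into $D_n\setminus D$ do not accumulate anywhere outside $D$. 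Proposition \ref{prop:orbit_seg} is the technical engine that lets one read off the itinerary of points along such a concatenated orbit from the addresses of the successive points, so that one can invoke Theorem \ref{thm:symomega} to verify that the $\omega$-limit set of the resulting $y$ is exactly $D$: every point of $D$ is approximated infinitely often (density plus the $d_j$ passes), and no point outside $D$ is, because the orbit segments are pinned $\epsilon_k$-close to $D$ with $\epsilon_k\to 0$ and the symbolic/interval bookkeeping (via Lemmas \ref{lem:itinint}, \ref{lem:periodadmis}) prevents spurious limit itineraries from appearing.

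I expect the main obstacle to be precisely the bookkeeping that keeps the orbit of $y$ from accumulating on points \emph{outside} $D$. The forward implication in the no-critical-point case is clean, but once $c\in D$ the orbit must repeatedly be steered through a neighbourhood of $c$, and near $c$ the map folds intervals, so small errors in a pseudo-orbit can be amplified and a naive shadowing orbit could drift off $D$; controlling this is exactly why one needs the $\delta_T$-scale, the accessible side $A$, and the nested sets $D_n$ with their carefully chosen radii $2^{-n}\lambda^{-n_p}\delta_T$, so that passage through $c$ via a preimage $p$ lands back in $D$ rather than merely near it. Making the choice of $n(k)$, $\epsilon_k$ and the lengths of the orbit segments mutually compatible — so that the union of all excursion sets has closure equal to $D$ — is the delicate part; the symbolic reformulation via Proposition \ref{prop:orbit_seg} and Theorem \ref{thm:symomega} is what ultimately makes the verification tractable, since it reduces "$\omega(y)=D$" to combinatorial statements about which finite words occur infinitely often in $\itin(y)$.
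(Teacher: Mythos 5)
Your outline matches the paper's proof in all its main moves: the same cycle of implications with Theorems \ref{thm:omega_WI} and \ref{thm:WI=ICT} disposing of $(\ref{pertent3})\Rightarrow(\ref{pertent2})\Rightarrow(\ref{pertent1})$, the same case split ($c\notin D$ via Theorem \ref{thm:tentnocrit}; $c$ isolated in $D$ forcing $D=\Orb(c)$; the main case $c\in D$ not isolated), the enlarged sets $D_n$ from Proposition \ref{prop:ICTextn}, shadowing from Theorem \ref{thm:tentshad}, genuine orbit segments shadowing $\eta_n$-pseudo-orbits through finer and finer nets of $D_n$, and Proposition \ref{prop:orbit_seg} for the symbolic bookkeeping. (Your use of a countable dense subset of $D$ rather than the paper's finite $2^{-n}$-nets $F_n$, and of Theorem \ref{thm:symomega} rather than direct metric estimates for the final verification, are cosmetic variations.)

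However, there is one step you leave unexplained, and it is precisely the crux of the paper's argument: how the single point $y$ comes into existence. Concatenating the genuine orbit segments $\langle c_n,\dots,T^{J_n-1}(c_n),d_n,\dots\rangle$ produces only an asymptotic pseudo-orbit, not an orbit, and you cannot obtain $y$ by applying shadowing once to this infinite concatenation: the shadowing property gives, for a $\delta$-pseudo-orbit, a point tracking it within a \emph{fixed} $\epsilon$, whereas here the tracking error must tend to $0$ along the orbit (limit shadowing is not available for these maps, and the paper explicitly avoids assuming it). The paper's solution is symbolic: define $\gamma$ to be the concatenated address string of the segments, use Proposition \ref{prop:orbit_seg} to show $\sigma^t(\gamma)$ agrees with $\itin(a_t)$ on ever longer initial blocks, and then prove $\gamma$ is \emph{admissible} --- ruling out $\sigma^l(\gamma)=K$ because that would force $D\subset\Orb(c)$, and ruling out $\sigma^l(\gamma)\succ K$ via Lemma \ref{lem:periodadmis}, where the periodicity of $c$ is essential since it localises any admissibility violation to a window of length at most $2m$ on which $\gamma$ agrees with an actual itinerary. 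Only then does one get a point $y$ with $\itin(y)=\gamma$, and expansivity (itineraries agreeing on $n$ places force distance at most $\lambda^{-n}$) yields $|T^t(y)-a_t|\to 0$, from which $\omega(y)=D$ follows. Your proposal cites Lemma \ref{lem:periodadmis} only as guarding against ``spurious limit itineraries'' in $\omega(y)$, and speaks of ``the resulting $y$'' without saying what produces it; as written, the construction of $y$ is a genuine gap, and filling it is exactly the admissibility argument above.
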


\begin{proof}
For (\ref{pertent3}) $\Rightarrow$ (\ref{pertent2}) and (\ref{pertent2}) $\Rightarrow$ (\ref{pertent1}) we refer to Theorems \ref{thm:omega_WI} and \ref{thm:WI=ICT} respectively.

For (\ref{pertent1}) $\Rightarrow$ (\ref{pertent3}), let $D \subset I$ be closed and internally chain transitive; by Proposition \ref{prop:ICTinv}, $D$ is invariant.  Thus either $D = \{0\}$ or $D \subset [T^2(c), T(c)]$; because $\{0\}$ is an $\omega$-limit set we may assume without loss of generality that $D \subset [T^2(c), T(c)]$.

By Theorem \ref{thm:tentnocrit} if $c \not\in D$ we have that $D$ is an $\omega$-limit set, so suppose from now on that $c\in D$.

Let $c$ have period $m\geq3$. If $c$ is isolated in $D$, then by invariance of $D$, $T^j(c)$ is also a pre-image of $c$ so is isolated in $D$.  Because $D$ is internally chain transitive, it follows that $D = \Orb(c)$, which is an $\omega$-limit set. Thus, without loss of generality we may assume that $c$ is not isolated in $D$, which gives us that no element of $Orb(c)$ is isolated in $D$, thus it follows that $c$ is not isolated in $A\cap D$.

For each $n \in \nat$ and each $p\in P$ let $\delta_T$, $n_p$ and $P_n$ be as defined above, and let $D_n$ be constructed as in Proposition \ref{prop:ICTextn}. Thus $D_n$ is closed and internally chain transitive by property (3) of \ref{prop:ICTextn}, and is thus invariant by Proposition \ref{prop:ICTinv}.

Since $c$ is not isolated in $A\cap D$, we have constructed $D_n$ so that for each $p\in P\cap D$ it contains points which accumulate on $p$, meaning no element of $D_n \cap P$ is isolated in $D_n$.  $D_n \setminus B_{2^{-(n + 1)}}(P_n)$ is compact so we can find a finite $F \subset D_n \setminus P_n$ such that
\[
    D_n \setminus B_{2^{-(n + 1)}}(P_n) \subset B_{2^{-n}}(F).
\]
For each $p \in P_n$ pick $x_p \in D_{(n + 1), p}$.  Then
\[
    F_n := F \cup \{x_p\ :\ p \in P_n\}
\]
is a finite subset of $D_n \setminus P_n$ such that
\[
    D_n \subset B_{2^{-n}}(F_n).
\]
Write $F_n = \{b_{n, i}\ :\ i \leq I_n\}$.

For each $n\in\nat$ let $\epsilon_n > 0$ be such that $B_{\epsilon_n}(F_n) \cap P_n =\emptyset$ and note that $\epsilon_n < 2^{-n}$.  $T$ has shadowing by Theorem \ref{thm:tentshad}, so there exists $\eta_n < \epsilon_n$ such that every $\eta_n$-pseudo-orbit is $\epsilon_n$-shadowed.  Because $D_n$ is internally chain transitive we can find a $\eta_n$-pseudo-orbit from $b_{n, 0}$ to $b_{n, I_n}$ through each $b_{n, i}$ such that every member of the pseudo-orbit is in $D_n$, and then find $c_n \in [0, 1]$ and $J_n \in \nat$ such that $\langle T^j(c_n)\ |\ j \leq J_n\rangle$ $\epsilon_n$-shadows this $\eta_n$-pseudo-orbit.

For each $n \in \nat$, find a $\eta_{n + 1}$-pseudo-orbit from $b_{n, I_n}$ to $b_{n + 1, 0}$ such that every member of the pseudo-orbit is in $D_n$ and find $d_n \in I$ and $K_n \in \nat$ such that $\langle T^k(d_n)\ |\ k \leq K_n\rangle$ $\epsilon_{n + 1}$-shadows this pseudo-orbit.

Since $F_n$ is a subset of the invariant set $D_n$ and $B_{\epsilon_n}(F_n) \cap P_n =\emptyset$ and $T(c), T^2(c) \in P_n$ we know that $c_n, d_n \in [T^2(c), T(c)]$ for every $n \in \nat$.

Let
\[
    \langle a_t\ |\ t \geq0\rangle = \langle c_1,T(c_1),\ldots,T^{J_1-1}(c_1),d_1,T(d_1),\ldots,T^{K_1-1}(d_1),c_2,\ldots\rangle.
\]
For each $t \in \nat$, let
\[
    \alpha^t := \langle \alpha^t_i\ |\ i \geq 0\rangle = \itin(a_t)
\]
and let
\[
    \gamma := \langle \gamma_i\ |\ i \in \nat\rangle = \langle \alpha^i_0\ |\ i \in \nat\rangle.
\]
Each $a_t$ is some (possibly trivial) forward image of a $c_n$ or a $d_n$ and consequently must lie in $[T^2(c), T(c)]$.  It follows by Definition \ref{def:admiscond} that for each $\alpha^t$, $\sigma^k(\alpha^t) \preceq K$ for every $k\geq0$.

Fix $n \in \nat$.  We have that $B_{\epsilon_n}\left(b_{n, I_n}\right) \cap P_n =\emptyset$ and hence $T^{J_n}(c_n) \notin P_n$.  In particular, $T^j(x) \notin P_n$ for any $x\in B_{\epsilon_n}\left(b_{n, I_n}\right)$ and any $j \le J_n$. Because $\Orb(c) \subset P_n$ for each $n \in \nat$, it follows that $\gamma \in \{0, 1\}^{\nat}$.

For each $n \in \nat$ let $t_n = \sum_{n' < n}{J_{n'} + K_{n'}}$, so that $a_{t_n} = c_n$ for each $n \in \nat$. Thus $\alpha^{t_n}=\itin(a_{t_n})=\itin(c_n)$. Also let $q_n=\max\{n,2m\}$. We claim that for each $t\geq t_n$, $\restr{\alpha^t}{q_n} = \restr{\sigma^t(\gamma)}{q_n}$. To see this, first notice that both $d_n$ and $T^{J_n}(c_n)$ are in $B_{\epsilon_n}(b_{n,I_n})$, and $B_{\epsilon_n}(b_{n,I_n})\cap P_n=\emptyset$. So since no point in $B_{\epsilon_n}(b_{n,I_n})$ maps to $c$ in any of their first $J_n$ iterations, Proposition \ref{prop:orbit_seg} tells us that $\restr{\alpha^t}{q_n} = \restr{\sigma^t(\gamma)}{q_n}$ since $q_n<J_n$, which proves the claim.

To prove admissibility of $\gamma$, suppose firstly that $\sigma^l(\gamma) = K$ for some $l \in \nat$ and fix $x \in D$.  For each $n \in \nat$ there exists $i \leq I_n$ such that $|b_{n, i} - x| < 2^{-n}$.  Then there exists $j \leq J_n$ such that $|T^j(c_n) - b_{n, i}| < \epsilon_n$ and, of course, there exists $r_n \geq t_n$ such that $a_{r_n} = T^j(c_n)$.  We have that
\[
    |a_{r_n} - x| < 2^{-n} + \epsilon_n
\]
and that by our claim above, for every $n \in \nat$
\[
    \restr{\itin(a_{r_n})}{n} = \restr{\alpha^{r_n}}{n} = \restr{\sigma^{r_n}(\gamma)}{n}.
\]
For some $n'\in\nat$, $t_n'\geq l$, so for each $n\geq n'$
\[\restr{\itin(a_{r_n})}{n} = \restr{\sigma^{r_n}(\gamma)}{n} = \restr{\sigma^{r_n-l}}{K}\]
It follows that either the upper- or lower- limit itinerary of $x$ is an iterate of $K$, which forces $x \in \Orb(c)$.  %\textbf{I probably should add more detail here.}
But then since $x\in D$ was arbitrary, $D \subset \Orb(c)$, a contradiction.

If $\sigma^l(\gamma) \succ K$ for some $l \in \nat$ then by Lemma \ref{lem:periodadmis} there exists $s \geq0$ such that $\sigma^{l + s}(\gamma) \succ K$ and $\restr{\sigma^{l + s}(\gamma)}{2m} \neq \restr{K}{2m}$.  We also have that $\restr{\alpha^{l + s}}{2m} = \restr{\sigma^{l + s}(\gamma)}{2m}$.  Thus, $\alpha^{l + s} \succ K$.  By admissibility conditions \ref{def:admiscond}, we have that $\alpha^{l + s} \notin \itin(I)$, a contradiction.

Thus $\sigma^l(\gamma) \prec K$ for each $l \in \nat$.  by admissibility conditions \ref{def:admiscond} there must exist $y \in [0, 1]$ such that $\itin(y) = \gamma$.

It remains to show that $D = \omega(y)$.

To see that $D \subset \omega(y)$ fix $x \in D$ and $n \in \nat$.  As before, there exists $r_n \ge t_n$ such that $|a_{r_n} - x| < 2^{-n} + \epsilon_n$ and that
\[
    \restr{\itin(a_{r_n})}{n} = \restr{\alpha^{r_n}}{n} = \restr{\sigma^{r_n}(\gamma)}{n} = \restr{\itin(T^{r_n}(y))}{n}.
\]
Thus, because two points of the interval whose itineraries agree on the first $n$ places cannot be more than a distance of $\lambda^{-n}$ apart,% \textbf{lemma?},
\[
    |T^{r_n}(y) - x| < 2^{-n} + \epsilon_n + \lambda^{-n},
\]
which converges to $0$ as $n \rightarrow \infty$.  Thus, $x$ is an accumulation point of $\Orb(y)$; i.e. $x \in \omega(y)$.

To see that $\omega(y) \subset D$ fix $n \in \nat$ and $r_n \ge t_n$.  By construction, $a_{r_n}$ (which is within $\lambda^{-n}$ of $T^{r_n}(y)$) is within $\epsilon_n$ of $D_n$.  Each point of $D_n$ is within $2^{-n}$ of $D$.  Thus, there exists $x_n \in D$ such that
\[
    |T^{r_n}(y) - x_n| < \lambda^{-n} + \epsilon_n + 2^{-n},
\]
which converges to $0$ as $n \rightarrow \infty$.  It follows that every accumulation point of $\Orb(y)$ has distance $0$ from $D$ and so $\omega(y) \subset \cl{D} = D$.
\end{proof}

\bibliographystyle{plain}
\bibliography{BibtexW-limitsets}

\end{document}